\documentclass[12pt]{amsart}

\usepackage{amssymb}
\usepackage{graphicx}
\usepackage{color}
\usepackage{hyperref}
\usepackage[margin=1in]{geometry}
\usepackage{enumerate}
\usepackage{tikz}
\usetikzlibrary{arrows,automata,positioning}

\newtheorem{theorem}{Theorem}
\numberwithin{theorem}{section}

\theoremstyle{definition}

\theoremstyle{remark}

\theoremstyle{plain}
\newtheorem{lem}[theorem]{Lemma}
\newtheorem{thm}[theorem]{Theorem}
\newtheorem{prop}[theorem]{Proposition}
\newtheorem{cor}[theorem]{Corollary}

\theoremstyle{definition}
\newtheorem{defn}[theorem]{Definition}
\newtheorem{exmp}[theorem]{Example}

\theoremstyle{remark}
\newtheorem{rem}[theorem]{Remark}

\numberwithin{equation}{section}

\title{Thick points of log-correlated Gaussian fields do not depend on the mollifier}

\author{Kyle Ambrose}

\date{\today}

\begin{document}

\begin{abstract}
The log-correlated Gaussian field (LGF) on $\mathbb{R}^d$ ($d \geq 2$) is a centered Gaussian random tempered distribution, defined modulo additive constants, whose covariance kernel is $\log(1/|x-y|)$. In two dimensions, the LGF coincides with the whole-plane Gaussian Free Field (GFF). Because the field is a distribution, studying its pointwise behavior requires regularization via convolution with a mollifier. A point is called $\alpha$-thick if the mollified field at that point grows like $\alpha \log(1/\epsilon)$ as the mollification scale $\epsilon \to 0$. A natural question is whether the set of $\alpha$-thick points depends on the choice of mollifier. We prove that for any two admissible mollifiers $\rho$ and $\sigma$ satisfying some mild conditions, the thick point sets coincide almost surely. The result holds in all dimensions $d \geq 2$, and in the special case $d = 2$ extends to the zero-boundary GFF on any open domain with harmonically non-trivial boundary via the Markov property. 
\end{abstract}

\maketitle

\tableofcontents

\section{Introduction}
The Gaussian free field (GFF) is the natural $d$-dimensional analogue of Brownian motion. While Brownian motion is a canonical random function, the GFF is a canonical random distribution whose covariance kernel is the Green's function. In two dimensions, this gives the GFF a logarithmic covariance structure. The GFF arises in many contexts, including conformal field theory, Schramm-Loewner evolution, and Liouville quantum gravity. Much of the rich mathematics comes from the logarithmic correlation.

For dimensions $d \geq 3$, the standard GFF has polynomial correlations and lacks this logarithmic behavior. However, one can define the log-correlated Gaussian field (LGF), which retains the logarithmic covariance kernel $\log(1/|x-y|)$ in every dimension $d \geq 2$. In dimension two, the LGF coincides with the usual whole-plane GFF modulo additive constants. 

While the LGF is a random distribution and not a pointwise-defined function, one can study its local behavior by regularizing via convolution with a mollifier. For a mollifier $\rho$, the mollified field $h_\epsilon^\rho(z)$ is obtained by averaging the field against $\rho$, rescaled to a ball of radius $\epsilon$ centered at $z$. For log-correlated fields, the variance of this mollified average grows like $\log(1/\epsilon)$. A point $z$ is called an $\alpha$-thick point if $h_\epsilon^\rho(z)$ grows like $\alpha \log(1/\epsilon)$ as $\epsilon \to 0$.

A natural question is whether the set of thick points depends on the choice of mollifier. In this paper, we prove that for the LGF on $\mathbb{R}^d$ in any dimension $d \geq 2$, the set of $\alpha$-thick points is independent of the choice of mollifier, provided the mollifier satisfies some mild conditions. Our result also extends to the zero-boundary GFF on open domains with harmonically non-trivial boundary in two dimensions via the Markov property.

We now recall the precise definitions needed to state our main theorems. Further background and details appear in Section~\ref{sec:preliminaries}.

Let $d \geq 2$. Let $\mathcal S(\mathbb R^d)$ denote the Schwartz space on $\mathbb R^d$, and write
$$
\mathcal S_0(\mathbb R^d)=\Bigl\{f\in\mathcal S(\mathbb R^d):\int_{\mathbb R^d} f(x)dx=0\Bigr\}.
$$

\begin{defn}\label{def:LGF}
The \emph{log-correlated Gaussian field on $\mathbb{R}^d$} is the centered Gaussian random tempered distribution modulo additive constants with covariance
$$
\operatorname{Cov}\bigl[(h,f_1),(h,f_2)\bigr]
=
\int_{\mathbb R^d\times\mathbb R^d}
\log\frac1{|x-y|}
f_1(x)f_2(y)dxdy
$$
for all $f_1,f_2\in\mathcal S_0(\mathbb R^d)$.
\end{defn}

\begin{defn}\label{def:mollifier}
An \emph{admissible mollifier} is a nonnegative Radon measure $\rho$ on $\mathbb R^d$ such that
\begin{enumerate}[(i)]
    \item $\operatorname{supp}(\rho) \subseteq \overline{B(0,1)}$,
    \item $\rho(\mathbb{R}^d) = 1$,
    \item $\displaystyle\iint_{\mathbb{R}^d \times \mathbb{R}^d} \left|\log \frac{1}{|x-y|}\right| \rho(dx)\rho(dy) < \infty$.
\end{enumerate}
\end{defn}
\begin{rem}\label{rem:compactSupportConvenience}
We include the compact support condition for convenience and expect that it can be relaxed to a sufficiently rapid decay hypothesis at infinity, but we do not pursue this here.
\end{rem}

The finite-energy condition (iii) ensures that the distributional pairing below is well defined. For $\epsilon>0$ and $z\in\mathbb R^d$, let $\rho_{z,\epsilon}$ denote the pushforward of $\rho$ under $y\mapsto z+\epsilon y$, and define the mollified field by
$$
h_\epsilon^\rho(z)
=
(h,\rho_{z,\epsilon}).
$$
This pairing is made precise in Section \ref{sec:mollification}. After fixing the additive constant of $h$, for example by requiring its unit spherical average to vanish, the mollified field is a well-defined random variable. 

\begin{defn}\label{def:thickPoints}
For an admissible mollifier $\rho$ whose mollified field $h_\epsilon^\rho(z)$ has a modification jointly continuous in $(z,\epsilon)$, and for $\alpha\in \mathbb{R}$, define
$$
\begin{aligned}
\mathcal T_{\alpha,\liminf}^\rho
&=
\Bigl\{
z \in \mathbb{R}^d :
\liminf_{\epsilon \to 0}
\frac{h_\epsilon^\rho(z)}{\log(1/\epsilon)}
= \alpha
\Bigr\},
\\
\mathcal T_{\alpha,\limsup}^\rho
&=
\Bigl\{
z \in \mathbb{R}^d :
\limsup_{\epsilon \to 0}
\frac{h_\epsilon^\rho(z)}{\log(1/\epsilon)}
= \alpha
\Bigr\},
\\
\mathcal T_{\alpha,\lim}^\rho
&=
\Bigl\{
z \in \mathbb{R}^d :
\lim_{\epsilon \to 0}
\frac{h_\epsilon^\rho(z)}{\log(1/\epsilon)}
= \alpha
\Bigr\}.
\end{aligned}
$$
\end{defn}
\begin{rem}\label{rem:additiveConstant}
The thick point sets are independent of the choice of additive constant of $h$. This is because changing the additive constant by $C$ changes the ratio $h_\epsilon^\rho(z)/\log(1/\epsilon)$ by $C/\log(1/\epsilon)$, which tends to $0$ as $\epsilon \to 0$.
\end{rem}
Thick point sets have been studied extensively in the literature (see \cite{aru2022thickpointsplanargff}, \cite{ding2026percolationthickpointslogcorrelated}, \cite{HuMillerPeres2010}). In two dimensions, thick points are usually defined using the circle average $\sigma$ (the uniform measure on the unit circle). For this choice, the Hausdorff dimension of $\mathcal{T}_{\alpha,\lim}^\sigma$ is $2 - \alpha^2/2$, and the set is almost surely empty for $\alpha > 2$ \cite{HuMillerPeres2010}.

Our main result applies to mollifiers satisfying a second-moment regularity estimate. For $p\in(0,1]$, we say that an admissible mollifier $\rho$ satisfies
\emph{Condition \eqref{eq:secondMoment}} if
\begin{equation}\label{eq:secondMoment}
\mathbb E\bigl[\bigl(h_\epsilon^\rho(z)-h_r^\rho(w)\bigr)^2\bigr]
\leq C
\left(
\frac{|(z,\epsilon)-(w,r)|}{\epsilon\wedge r}
\right)^p
\tag{$A_p$}
\end{equation}
for all $z,w\in\mathbb R^d$ and all $\epsilon,r\in(0,1]$ with
$\frac{1}{2}\leq \epsilon/r\leq 2$, where $C$ may depend on $\rho$, $p$, and $d$.

Condition \eqref{eq:secondMoment} is important because it implies a H\"older-continuity-type estimate for the mollified field $h_\epsilon^\nu(z)$ (see Lemma \ref{lem:jointContinuity}). 

The following is our main theorem.

\begin{thm}\label{thm:main}
Let $d \geq 2$, and let $h$ be the log-correlated Gaussian field on $\mathbb{R}^d$. Let $\rho$ and $\sigma$ be admissible mollifiers in the sense of Definition \ref{def:mollifier}. Suppose that there exists $p\in(0,1]$ such that $\rho$ and $\sigma$ both satisfy Condition \eqref{eq:secondMoment}. Then almost surely,
$$
\mathcal T_{\alpha,\star}^\rho
=
\mathcal T_{\alpha,\star}^\sigma
\qquad
\text{for every }
\alpha\in\mathbb R
\text{ and every }
\star\in\{\liminf,\limsup,\lim\}.
$$
\end{thm}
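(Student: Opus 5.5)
The plan is to show that, almost surely, the difference of the two mollified fields grows only like $\sqrt{\log(1/\epsilon)}$ up to lower-order factors, uniformly over points in a compact set:
$$
\sup_{z\in K}\,\bigl|h_\epsilon^\rho(z)-h_\epsilon^\sigma(z)\bigr| = o\bigl(\log(1/\epsilon)\bigr) \qquad\text{as } \epsilon\to 0 .
$$
Once this holds, the three ratios $h_\epsilon^\rho(z)/\log(1/\epsilon)$ and $h_\epsilon^\sigma(z)/\log(1/\epsilon)$ have the same $\liminf$, $\limsup$ and $\lim$ at every $z\in K$ simultaneously. Exhausting $\mathbb R^d$ by countably many balls $K$ then gives equality of all thick point sets, for all $\alpha$ and all $\star$, on a single event of probability one. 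By Remark~\ref{rem:additiveConstant} we may fix the additive constant arbitrarily. By Lemma~\ref{lem:jointContinuity} both fields have jointly continuous modifications, so the suprema are measurable.

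\textbf{Step 1 (variance of the difference).} Set $D_\epsilon(z)=h_\epsilon^\rho(z)-h_\epsilon^\sigma(z)=(h,\rho_{z,\epsilon}-\sigma_{z,\epsilon})$. The measure $\rho-\sigma$ has total mass zero, so its pairing with $h$ does not depend on the additive constant. The variance is
$$
\iint\log\frac1{|x-y|}\,(\rho_{z,\epsilon}-\sigma_{z,\epsilon})(dx)(\rho_{z,\epsilon}-\sigma_{z,\epsilon})(dy).
$$
Under the scaling $x\mapsto z+\epsilon x$ the kernel becomes $\log(1/\epsilon)+\log(1/|x-y|)$. The $\log(1/\epsilon)$ term is integrated against a product of mass-zero measures, so it vanishes. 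Hence $\operatorname{Var} D_\epsilon(z)=\iint \log\frac{1}{|x-y|}\,(\rho-\sigma)(dx)(\rho-\sigma)(dy)=:V$. This $V$ is finite by condition (iii) together with a Cauchy--Schwarz-type bound for the cross term $\iint|\log|x-y||\,\rho(dx)\sigma(dy)$. That bound is legitimate because the log kernel is positive definite on mass-zero measures, up to the bounded part coming from supports in $\overline{B(0,1)}$. Thus $\operatorname{Var}D_\epsilon(z)\le C$, uniformly in $z$ and $\epsilon$.

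\textbf{Step 2 (discretization and union bound).} Take $\epsilon_n=2^{-n}$ and a grid $G_n\subset K$ of mesh $\epsilon_n^{M}$, for a large $M$; its cardinality is $O(\epsilon_n^{-dM})$. By the Gaussian tail bound, $\mathbb P\bigl(|D_{\epsilon}(w)|>\lambda\bigr)\le 2e^{-\lambda^2/2C}$. Apply a union bound over $w\in G_n$ and over $\epsilon$ in a grid of $[\epsilon_{n+1},\epsilon_n]$ with spacing $\epsilon_n^{M}$, taking $\lambda=A\sqrt{n}$ with $A$ large. The resulting probabilities are summable in $n$. By Borel--Cantelli, almost surely, for all large $n$, $|D_\epsilon(w)|\le A\sqrt n$ at every grid point.

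\textbf{Step 3 (interpolation; the main obstacle).} We must control the oscillation of $D$ between grid points at scales $\epsilon\in[\epsilon_{n+1},\epsilon_n]$. For this we use Condition~\eqref{eq:secondMoment} for both $\rho$ and $\sigma$, which gives
$$
\mathbb E\bigl[(D_\epsilon(z)-D_r(w))^2\bigr]\le 4C\bigl(|(z,\epsilon)-(w,r)|/\epsilon_{n+1}\bigr)^p .
$$
Rescale the box $K\times[\epsilon_{n+1},\epsilon_n]$ by $\epsilon_n$, so that it becomes a set of diameter about $\epsilon_n^{-1}$ with unit-order H\"older increments. Kolmogorov--Chentsov, or Dudley's entropy bound for Gaussian processes, then controls the supremum of the increments over pairs at distance at most $\epsilon_n^{M}$: it is at most $\epsilon_n^{(M-1)p/2}$ times a random variable with Gaussian tails and polylogarithmic size. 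This is presumably exactly the content of Lemma~\ref{lem:jointContinuity}, which I would invoke in quantitative form. Another Borel--Cantelli argument makes the oscillation $o(1)$ almost surely.

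Combining Steps 2 and 3 gives $\sup_{z\in K,\ \epsilon\le\epsilon_{n_0}}|D_\epsilon(z)|/\sqrt{\log(1/\epsilon)}<\infty$ almost surely, which implies the displayed claim. The delicate points are getting constants in the modulus estimate that are uniform across dyadic scales, and confirming that $V<\infty$ under only the stated hypotheses.
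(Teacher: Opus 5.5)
Your proof is correct, but the key probabilistic step differs from the paper's. In the proof of Proposition~\ref{prop:uniformDiffGoal}, the paper proceeds as follows:
\begin{enumerate}[(1)]
\item It uses the process-level scaling identity of Lemma~\ref{lem:Xscaling} to see that $\sup_{x\in B(z,\epsilon)}|X(x,\epsilon)|$ has the law of $\sup_{x\in B(0,1)}|X(x,1)|$.
\item It applies Borell--TIS to get Gaussian concentration for this local supremum.
\item It takes a union bound over only $O(\epsilon^{-d})$ balls of radius $\epsilon$, at threshold $\delta\log(1/\epsilon)$, along $\epsilon_n=1/n$.
\item It uses Lemma~\ref{lem:jointContinuity} only for continuity and to interpolate in the scale variable.
\end{enumerate}

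You use scaling only to see that $\operatorname{Var}D_\epsilon(z)$ is constant, and you replace Borell--TIS by a pointwise Gaussian tail. You pay for this with a polynomially fine space--scale grid. Its cardinality $2^{O(nM)}$ is still beaten by $e^{-A^2n/(2V)}$, and the blow-up $\epsilon^{-p/2}$ in Lemma~\ref{lem:jointContinuity} is beaten by the mesh $\epsilon_n^{M}$ once $M\gamma>p/2$. Your route dispenses with Borell--TIS and with Lemma~\ref{lem:Xscaling}. It also yields the sharper bound $O(\sqrt{\log(1/\epsilon)})$ instead of $o(\log(1/\epsilon))$, although the paper's argument gives this too if one takes $u=A\sqrt{\log(1/\epsilon)}$.

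The two points you flag as delicate are already settled by the paper's setup.
\begin{enumerate}[(1)]
\item Uniform constants across scales: apply Lemma~\ref{lem:jointContinuity} to $h^\rho$ and $h^\sigma$ separately, or directly to $D$, which satisfies \eqref{eq:secondMoment} with constant $2(C_\rho+C_\sigma)$. This gives a single almost surely finite random constant valid at all scales simultaneously. So Step~3 needs no rescaling, no Dudley bound and no second Borel--Cantelli argument.
\item Finiteness of $V$: this holds simply because each $h^\nu_\epsilon(z)$ is an $L^2(\mathbb P)$ limit by Definition~\ref{def:measureExtension}. The Cauchy--Schwarz/positive-definiteness argument for the cross term is therefore unnecessary.
\end{enumerate}
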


In proving Theorem~\ref{thm:main}, we will actually prove a stronger statement.
\begin{prop}\label{prop:uniformDiffGoal}
Let $p\in(0,1]$, and let $\rho,\sigma$ be admissible mollifiers satisfying Condition \eqref{eq:secondMoment}. Define $X(z,\epsilon):=h_\epsilon^\rho(z)-h_\epsilon^\sigma(z)$. Then for every bounded open set $U\subset\mathbb R^d$,
$$
\sup_{z\in U}\frac{|X(z,\epsilon)|}{\log(1/\epsilon)}
\to0\qquad\text{almost surely as } \epsilon \to 0.
$$
\end{prop}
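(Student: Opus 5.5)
The plan is to show that $X(z,\epsilon)$ is a centered Gaussian process whose variance is bounded uniformly in $z$ and $\epsilon\in(0,1]$. We then combine this with a chaining/union bound over a discretization of $U\times(0,1]$ at dyadic scales, and finally interpolate using Condition \eqref{eq:secondMoment}.

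\textbf{Step 1: uniform variance bound.} By translation and scaling of the log kernel,
$$\operatorname{Var}X(z,\epsilon)=\iint\log\frac1{|x-y|}\,(\rho-\sigma)(dx)(\rho-\sigma)(dy).$$
The additive constant $\log(1/\epsilon)$ cancels because $(\rho-\sigma)(\mathbb R^d)=0$. Here we have fixed the additive constant so that this is the variance of the pairing against the mean-zero measure $\rho_{z,\epsilon}-\sigma_{z,\epsilon}$; by Remark \ref{rem:additiveConstant}, the normalization changes $X$ by nothing, since $X$ is a pairing with a mean-zero measure. The quantity above is finite by the finite-energy condition (iii), via Cauchy–Schwarz-type bounds on the mixed term. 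Call it $K<\infty$. It is independent of $z$ and $\epsilon$.

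\textbf{Step 2: discretization and union bound.} Fix $\delta>0$ and restrict to $\epsilon\in[2^{-n-1},2^{-n}]$. Cover $U\times[2^{-n-1},2^{-n}]$ by a grid with spacing $2^{-n}\cdot 2^{-Mn}$ in space and scale, for a large fixed $M$. This uses $O(2^{(d+1)(M+1)n})$ points. Gaussian tails with variance $\le K$ give
$$\mathbb P\bigl(|X|>\delta n\log 2 \text{ at some grid point}\bigr)\le C2^{(d+1)(M+1)n}e^{-\delta^2 n^2(\log 2)^2/(2K)},$$
which is summable in $n$.

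\textbf{Step 3: oscillation control within grid cells.} This is the main obstacle. Condition \eqref{eq:secondMoment} applies to $h^\rho$ and $h^\sigma$ separately, hence to $X$ up to a factor $2$ in $C$. Within a block with $\epsilon\asymp 2^{-n}$, rescale by $2^{n}$. Then $\mathbb E[(X(u)-X(v))^2]\le C'(|u-v|2^{n})^p$, so after rescaling the process has uniform H\"older-type increments on a set of diameter $O(2^n)$ in rescaled units. This is equivalently a union of $O(2^{dn})$ unit cubes.

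On each unit cube, Kolmogorov–Chentsov/Dudley bounds (or Lemma \ref{lem:jointContinuity}, which the paper asserts follows from \eqref{eq:secondMoment}) combined with Borell–TIS give
$$\mathbb P\Bigl(\sup_{\text{cell}}|X(u)-X(u_0)|>\delta n\Bigr)\le Ce^{-c\delta^2n^2}.$$
Here $u_0$ is the nearest grid point, and we use that the increment variance on a cell is bounded, indeed tiny. A union over the polynomially-in-$2^n$ many cells is again summable.

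It is worth noting that the grid in Step 2 is in fact unnecessary: unit cells after rescaling plus Borell–TIS with $\sup\operatorname{Var}\le K$ already suffice. In that form, the only needed input is a uniform bound on $\mathbb E\sup_{\text{unit cell}}|X|$, which Dudley's entropy integral provides from the $p$-H\"older increment bound.

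\textbf{Conclusion.} By Borel–Cantelli, almost surely for all large $n$,
$$\sup_{z\in U,\ \epsilon\in[2^{-n-1},2^{-n}]}|X(z,\epsilon)|\le 2\delta n\log2\le 4\delta\log(1/\epsilon).$$
Since $\delta$ is arbitrary (take a countable sequence), the claim follows. Joint continuity of the modification guarantees that the suprema are measurable and equal to suprema over countable sets.
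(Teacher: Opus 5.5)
Your proof is correct, but it handles the scale variable differently from the paper. The paper works one scale at a time. It applies Borell--TIS to $\sup_{x\in B(0,1)}|X(x,1)|$ and transfers the tail bound to every ball $B(z,\epsilon)$ via the exact equality in law of Lemma~\ref{lem:Xscaling}. It then takes a union bound over $O(\epsilon^{-d})$ balls and applies Borel--Cantelli along $\epsilon_n=1/n$. Only afterwards does it fill in $\epsilon\in[\epsilon_{n+1},\epsilon_n]$ using the pathwise estimate of Lemma~\ref{lem:jointContinuity}. This is why it needs the polynomially dense sequence $1/n$ (a dyadic sequence would not work there) and $\gamma\in(p/4,p/2)$, so that the interpolation error $M(\log n)\,n^{p/2-2\gamma}$ vanishes.

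You instead take space--scale blocks $U\times[2^{-n-1},2^{-n}]$. After rescaling by $2^n$, Condition~\eqref{eq:secondMoment} becomes a scale-free increment bound on $2^nU\times[1/2,1]$. You then control $\sup|X|$ jointly in $(z,\epsilon)$ on each rescaled unit cube, using three ingredients: the uniform one-point variance $K$, a uniform bound on $\mathbb{E}\sup$ (from Dudley's entropy integral, or equally from invariance in law of the joint process under $(z,\epsilon)\mapsto(z_0+2^{-n}z,\,2^{-n}\epsilon)$), and Borell--TIS.

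Your route removes the interpolation step entirely. It never uses the random H\"older constant or the $(\log(1/\epsilon))^\zeta\epsilon^{-p/2}$ loss from Lemma~\ref{lem:jointContinuity}; joint continuity enters only for measurability and separability. In the Dudley form it uses exact scale invariance only for the one-point variance, so it adapts more easily to fields that are only approximately scale invariant. The paper's route, in exchange, needs no entropy bound: almost sure boundedness at scale one plus exact invariance in law gives uniform Borell--TIS constants for free, and Lemma~\ref{lem:jointContinuity} is needed anyway for the continuous modification.

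As you note yourself, the fine grid of Step 2 is superfluous, and Step 3 as written mixes the fine grid cells with unit cubes. The unit-cube version alone is a complete proof. Finally, finiteness of $K$ is immediate because $h^\rho_\epsilon(z)$ and $h^\sigma_\epsilon(z)$ lie in $L^2(\mathbb{P})$, so no Cauchy--Schwarz argument for the logarithmic kernel is needed.
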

This uniform $o(\log(1/\epsilon))$ bound implies the equality of thick point sets asserted in Theorem~\ref{thm:main}, since it forces the normalized liminf, limsup, and, when it exists, limit of the two mollified fields to agree at every point. The following sufficient condition is useful for checking the hypothesis of Theorem~\ref{thm:main}.

\begin{thm}\label{thm:sufficient}
Let $d \geq 2$, let $p\in(0,1]$, and let $h$ be the log-correlated Gaussian field on $\mathbb{R}^d$. Let $\rho$ be an admissible mollifier. If there
exists $\epsilon_0>0$ such that
$$
\sup_{a\in B(0,1+\epsilon_0)}
\int_{\mathbb{R}^d} \frac{1}{|a-y|^p} \rho(dy) < \infty,
$$
then $\rho$ satisfies Condition \eqref{eq:secondMoment}.
\end{thm}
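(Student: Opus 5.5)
We prove the bound by computing the variance of the increment explicitly from the covariance kernel. Mean-zero increments cancel the additive constant: $h^\rho_\epsilon(z)-h^\rho_r(w)=(h,\mu)$ with $\mu:=\rho_{z,\epsilon}-\rho_{w,r}$ a signed measure of total mass zero. By the definition of the pairing in Section~\ref{sec:mollification} and condition (iii), which makes all energies finite,
$$
\mathbb E\bigl[(h,\mu)^2\bigr]=\iint \log\frac1{|x-y|}\,\mu(dx)\mu(dy).
$$
Set $G(a,s;b,t):=\iint \log\frac{1}{|a+sx-b-ty|}\rho(dx)\rho(dy)$. Expanding the quadratic form gives
$$
\mathbb E\bigl[(h,\mu)^2\bigr]=\bigl[G(z,\epsilon;z,\epsilon)-G(z,\epsilon;w,r)\bigr]+\bigl[G(w,r;w,r)-G(w,r;z,\epsilon)\bigr].
$$
So it suffices to bound $|G(a,s;a,s)-G(a,s;b,t)|$ by $C(|(a,s)-(b,t)|/(s\wedge t))^p$. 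When the ratio $\delta:=|(a,s)-(b,t)|/(s\wedge t)$ is at least some fixed $\delta_0$, we need only a uniform bound on the variance. Then $\mathbb E[(h,\mu)^2]\le 2\mathbb E[(h^\rho_\epsilon(z)-h^\rho_r(z))^2]+2\mathbb E[(h^\rho_r(z)-h^\rho_r(w))^2]$. The first term is $O(1)$ by scaling, since $\epsilon/r\in[1/2,2]$ and the log kernel shifts by a constant under dilation. The second is bounded by $2\iint|\log|x-y||\rho\rho$ plus a $\log$ of the separation, which we handle by choosing $\delta_0$ small enough and, if needed, using $\delta^p\ge\delta_0^p$ together with a crude bound. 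Actually, for large separation the variance grows like $\log\delta$, which is dominated by $C\delta^p$.

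\textbf{Main case, $\delta$ small.} By translation and scaling we may normalize $a=0$ and $s=1$, using the scale-invariance of the log kernel up to additive constants, which cancel in the differences. Then $t\in[1/2,2]$, $|b|+|t-1|\lesssim\delta$, and
$$
G(0,1;0,1)-G(0,1;b,t)=\iint \log\frac{|x-b-ty|}{|x-y|}\rho(dx)\rho(dy).
$$
Write $x-b-ty=(x-y)-\eta$ with $\eta:=b+(t-1)y$, so that $|\eta|\lesssim\delta$ uniformly for $y\in\overline{B(0,1)}$. We then use the elementary inequality
$$
\bigl|\log|u-\eta|-\log|u|\bigr|\le C_p\Bigl(\frac{|\eta|^p}{|u|^p}+\frac{|\eta|^p}{|u-\eta|^p}\Bigr)\qquad(p\in(0,1]).
$$
This follows from $|\log(A/B)|\le C_p|A-B|^p\,(A^{-p}+B^{-p})$, which in turn comes from $|\log r|\le C_p|r-1|^p$ for $r\ge1$ applied to $\max/\min$. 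The right-hand side is then integrated against $\rho(dx)\rho(dy)$. For fixed $y$, the first term is $|\eta|^p\int|x-y|^{-p}\rho(dx)$ with $y\in B(0,1)$. The second is $|\eta|^p\int |x-(b+ty)|^{-p}\rho(dx)$, where $|b+ty|\le 1+O(\delta)<1+\epsilon_0$ once $\delta<\delta_0$ is small. Both inner integrals are bounded by the hypothesis $\sup_{a\in B(0,1+\epsilon_0)}\int|a-y|^{-p}\rho(dy)<\infty$, so the whole difference is at most $C\delta^p$. The symmetric term is handled the same way with the roles of $(a,s)$ and $(b,t)$ swapped; the ratio $s/t\in[1/2,2]$ keeps the normalization consistent.

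The step I expect to be most delicate is the large-$\delta$ regime. We must make sure the variance is bounded by $C\delta^p$ with a constant independent of $\epsilon,r\le1$, using only the scale covariance. I would handle it by the normalization above: after scaling, the variance depends only on $(b,t)$, and for $|b|\ge1$ it is at most $C+2\log(|b|+3)\le C'|b|^p$, with the constant controlled by the finite energy (iii). The justification of the pairing and covariance formula for measures is taken from Section~\ref{sec:mollification}.
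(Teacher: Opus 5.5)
Your proposal is correct and is essentially the paper's argument. Note that $\iint\log\frac{|x-b-ty|}{|x-y|}\,\rho(dx)\rho(dy)=\int\bigl[U(y)-U(b+ty)\bigr]\rho(dy)$ with $U(a)=\int\log\frac{1}{|a-x|}\,\rho(dx)$, so your integrated elementary inequality is exactly the paper's $p$-H\"older bound on the potential $U$. The only difference there is that you normalize each one-sided difference at its own base point, whereas the paper normalizes once at $(w,r)$, which produces its extra $-\log\lambda$ term.

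Your small/large-$\delta$ split, and the somewhat tangled large-separation discussion, are unnecessary. Since $\operatorname{supp}\rho\subseteq\overline{B(0,1)}$, one has $\int|a-y|^{-p}\rho(dy)\le\epsilon_0^{-p}$ whenever $|a|\ge 1+\epsilon_0$, so the integral is bounded for every $a\in\mathbb{R}^d$. Your main-case estimate therefore already covers all separations, which is how the paper avoids the case split.
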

The integrability condition appearing in
Theorem~\ref{thm:sufficient} holds for many natural classes of mollifiers, including bounded compactly supported densities and spherical averages. In Section \ref{sec:zeroBoundary}, we prove the analogous statement to Theorem \ref{thm:main} for the zero-boundary GFF.

\begin{defn}\label{def:zeroBoundaryGFF}
Let $D\subset\mathbb R^2$ be an open domain with harmonically
non-trivial boundary, and let $G_D$ be its Dirichlet Green's function.
The \emph{zero-boundary Gaussian free field} $h^D$ on $D$ is the centered
Gaussian random distribution with covariance
$$
\operatorname{Cov}\bigl[(h^D,f),(h^D,g)\bigr]
=\iint_{D\times D}G_D(x,y) f(x)g(y)dxdy
$$
for all $f,g\in C_c^\infty(D)$.
\end{defn}

In two dimensions, $G_D(x,y)$ behaves like $\log(1/|x-y|)$ near the diagonal. For an admissible mollifier $\rho$, define $\mathcal T_{\alpha,\star}^{D,\rho}$ as in Definition~\ref{def:thickPoints}, with $h^D$ in place of $h$, and with $\epsilon$ small enough so that the averaging region lies inside $D$.

\begin{thm}\label{thm:main-zero}
Let $h$ be the log-correlated Gaussian field on $\mathbb{R}^2$, let $D\subset\mathbb R^2$ be an open domain with harmonically non-trivial boundary, and let $\rho,\sigma$ be admissible mollifiers (Definition \ref{def:mollifier}). If both $\rho$ and $\sigma$ satisfy Condition \eqref{eq:secondMoment} with respect to $h$, then almost surely $$
\mathcal T_{\alpha,\star}^{D,\rho}
=\mathcal T_{\alpha,\star}^{D,\sigma}
\qquad\text{for every }
\alpha\in\mathbb R \text{ and every }\star\in\{\liminf,\limsup,\lim\}.
$$
\end{thm}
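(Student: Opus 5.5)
We reduce Theorem~\ref{thm:main-zero} to the whole-plane result, Proposition~\ref{prop:uniformDiffGoal}, by coupling. The Markov property of the GFF lets us write, on a smaller domain, the zero-boundary field as a whole-plane LGF minus a harmonic function.

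\textbf{Step 1: the coupling.} Fix a bounded open set $V$ with $\overline V\subset D$. By the domain Markov property we couple $h$ (with a fixed additive normalization) and $h^D$ so that
$$
h|_{D}=h^D+\mathfrak h,
$$
where $\mathfrak h$ is a random field that is harmonic in $D$. Concretely, $\mathfrak h$ is the harmonic extension of $h$ from $\partial D$, or more precisely the projection of $h$ onto harmonic functions in $D$. Two cases must be checked:
\begin{enumerate}[(a)]
\item If $D$ is bounded, this is the standard Markov decomposition.
\item If $D$ is unbounded but harmonically non-trivial, the decomposition still holds for the whole-plane GFF modulo constants, because $G_D$ exists.
\end{enumerate}
The additive-constant ambiguity is harmless by Remark~\ref{rem:additiveConstant}. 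Since every thick-point set is a union over an exhaustion $V_n\uparrow D$ of its intersections with $V_n$, it suffices to work inside one fixed $V$.

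\textbf{Step 2: the harmonic part is negligible.} Let $\delta=\operatorname{dist}(V,\partial D)$. For $z\in V$ and $\epsilon<\delta/2$, the measure $\rho_{z,\epsilon}$ is supported in $\overline{B(z,\epsilon)}\subset D$, so $(\mathfrak h,\rho_{z,\epsilon})$ is just the integral of the continuous function $\mathfrak h$ against $\rho_{z,\epsilon}$. Harmonicity makes $\mathfrak h$ almost surely continuous, hence bounded, on the compact set $\overline{V_{\delta/2}}$. Therefore
$$
\sup_{z\in V,\ \epsilon<\delta/2}\bigl|(\mathfrak h,\rho_{z,\epsilon})\bigr|\le \sup_{\overline{V_{\delta/2}}}|\mathfrak h|<\infty \quad\text{a.s.,}
$$
and the same bound holds for $\sigma$. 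It follows that
$$
h^{D,\rho}_\epsilon(z)-h^{D,\sigma}_\epsilon(z)=X(z,\epsilon)-\bigl[(\mathfrak h,\rho_{z,\epsilon})-(\mathfrak h,\sigma_{z,\epsilon})\bigr].
$$
The bracketed term is $O(1)$ uniformly in $z\in V$.

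\textbf{Step 3: conclusion.} By Proposition~\ref{prop:uniformDiffGoal}, $\sup_{z\in V}|X(z,\epsilon)|/\log(1/\epsilon)\to0$ almost surely. Combined with Step 2, this gives
$$
\sup_{z\in V}\frac{\bigl|h^{D,\rho}_\epsilon(z)-h^{D,\sigma}_\epsilon(z)\bigr|}{\log(1/\epsilon)}\to0 \quad\text{a.s.}
$$
Hence the normalized liminf, limsup and limit of $h^{D,\rho}_\epsilon(z)$ and $h^{D,\sigma}_\epsilon(z)$ agree for every $z\in V$. Taking a countable union over $V_n$ yields the theorem on a single almost sure event, for all $\alpha$ and all $\star$ simultaneously.

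For the thick-point sets to be defined, $h^{D,\rho}_\epsilon(z)$ needs a modification jointly continuous in $(z,\epsilon)$. This follows from the continuity of $h^\rho_\epsilon$ given by Lemma~\ref{lem:jointContinuity}, together with the continuity of $(z,\epsilon)\mapsto(\mathfrak h,\rho_{z,\epsilon})$. The latter holds because $\mathfrak h$ is smooth and $\rho$ is a finite measure.

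\textbf{Main obstacle.} The delicate step is Step 1 when $D$ is unbounded and only harmonically non-trivial. There one must show that the projection $\mathfrak h$ is a genuine harmonic function on $D$, and that the pairing $(h^D,\rho_{z,\epsilon})$ with a measure rather than a test function agrees under the coupling. For the second point I would approximate $\rho_{z,\epsilon}$ by smooth compactly supported functions in $D$. The finite-energy condition (iii) makes this approximation converge in $L^2$ for both $h$ and $h^D$, since $G_D(x,y)=\log(1/|x-y|)+{}$(a continuous function) near the diagonal.
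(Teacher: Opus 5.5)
Your proposal is correct and follows essentially the same route as the paper. The ingredients match: the Markov decomposition (Lemma~\ref{lem:markov}) writes the whole-plane field on $D$ as a zero-boundary part plus a harmonic part, where the zero-boundary part is known only up to an additive constant that cancels in the difference field. The harmonic contribution is then controlled uniformly on compacts; you bound it by $O(1)$ while the paper shows it tends to $0$, and either suffices after dividing by $\log(1/\epsilon)$. Proposition~\ref{prop:uniformDiffGoal} together with a countable exhaustion of $D$ finishes the argument.

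Your almost-sure coupling replaces the paper's equality-in-law step \eqref{hcircequivalence}. Of the points you flag as delicate, the paper settles unbounded $D$ by citing Lemma~\ref{lem:markov}, and it leaves the measure pairings under the decomposition and the joint continuity of $h^{D,\rho}_\epsilon$ implicit.
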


Section~\ref{sec:preliminaries} gives background on the two-dimensional GFF, the log-correlated Gaussian field, and mollification. In Section~\ref{sec:mollifiers}, we prove Theorem~\ref{thm:sufficient} and give a simplified criterion for radial mollifiers. Section~\ref{sec:jointContinuity} contains a modified
Kolmogorov-Chentsov theorem and uses Condition~\eqref{eq:secondMoment} to obtain a H\"older-type continuity estimate for the mollified field. Section~\ref{sec:mainTheorem} proves Theorem~\ref{thm:main}. Finally, Section~\ref{sec:zeroBoundary} proves Theorem~\ref{thm:main-zero}.

\section{The Gaussian Free Field and Log-Correlated Fields}\label{sec:preliminaries}

In this section we recall the Gaussian free field (GFF) and the log-correlated Gaussian field on $\mathbb{R}^d$ (LGF), and the notion of mollification. More detailed treatments can be found in \cite{berestycki2025gaussianfreefieldliouville}, 
\cite{duplantier2014logcorrelatedgaussianfieldsoverview}, \cite{lodhia2016fractionalgaussianfieldssurvey}, \cite{WernerPowell2021}, and \cite{sheffield2006gaussianfreefieldsmathematicians}.

\subsection{The Gaussian Free Field in two dimensions}

Let $D \subset \mathbb{R}^2$ be an open domain with harmonically non-trivial boundary, and let $G_D$ be the Dirichlet Green's function on $D$. We use the zero-boundary Gaussian free field $h^D$ from Definition~\ref{def:zeroBoundaryGFF}.

In two dimensions, the Dirichlet Green's function $G_D$ has the asymptotic behavior
$$G_D(x,y) = \log \frac{1}{|x-y|} + O(1)$$ 
as $y \to x$, for any $x \in D$.
See \cite{berestycki2025gaussianfreefieldliouville} for a comprehensive treatment of the Green's function.

One can obtain the whole-plane Gaussian free field on $\mathbb{R}^2$ by taking a local limit of the zero-boundary GFF on large disks, modulo additive constants. This field is exactly the case $d=2$ of the log-correlated Gaussian field from Definition~\ref{def:LGF}. We denote the whole-plane GFF on $\mathbb{R}^2$ by $h$.

An important property of this field is the Markov property.

\begin{lem}[{\cite[Lemma 2.2]{GMS}, \cite[Proposition 2.8]{MillerSheffield2017}}]\label{lem:markov}
Let $h$ be the whole-plane GFF on $\mathbb{R}^2$ (the
log-correlated Gaussian field from Definition~\ref{def:LGF} with
$d=2$), with the additive constant chosen so that the average of $h$ over the unit circle is $0$.
For each open set $U\subset\mathbb{R}^2$ with harmonically non-trivial boundary, we have the decomposition
$$
h = h^U + \phi,
$$
where $\phi$ is a random distribution which is harmonic on $U$ and is determined by $h|_{\mathbb R^2\setminus U}$, and $h^U$ is independent of $\phi$ and has the law of a zero-boundary GFF on $U$ minus its average over $\partial \mathbb{D}\cap U$.
If $U$ is disjoint from $\partial \mathbb{D}$, then $h^U$ is a zero-boundary GFF and is independent of $h|_{\mathbb R^2\setminus U}$.
\end{lem}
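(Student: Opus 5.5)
The plan is to deduce the decomposition from an orthogonal splitting of the Dirichlet space. Let $\mathcal H$ be the completion of $C_c^\infty(\mathbb R^2)$ modulo constants under the Dirichlet inner product $(f,g)_\nabla=\frac1{2\pi}\int\nabla f\cdot\nabla g$. The whole-plane GFF modulo constants is the standard Gaussian on $\mathcal H$: formally $h=\sum_n \xi_n f_n$ for an orthonormal basis $(f_n)$ and i.i.d.\ standard normals $\xi_n$, with the normalization $\frac1{2\pi}$ chosen so that the covariance kernel is $\log(1/|x-y|)$ as in Definition~\ref{def:LGF}.

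\textbf{Step 1: Hilbert space splitting.} I would write $\mathcal H=\mathcal H_0(U)\oplus\mathcal H_{\rm harm}(U)$. Here $\mathcal H_0(U)$ is the closure of $C_c^\infty(U)$, and $\mathcal H_{\rm harm}(U)$ is its orthogonal complement. Integrating by parts shows that $f\perp C_c^\infty(U)$ exactly when $\Delta f=0$ on $U$ in the distributional sense, so the complement consists of functions harmonic on $U$. Choosing orthonormal bases adapted to the two summands expresses $h$ as $\tilde h^U+\phi$ with $\tilde h^U$ and $\phi$ independent Gaussian fields. Since the Dirichlet form restricted to $\mathcal H_0(U)$ is the one defining the zero-boundary GFF, $\tilde h^U$ is a zero-boundary GFF on $U$; here harmonic non-triviality of $\partial U$ is what makes the Green's function $G_U$ finite and $\mathcal H_0(U)$ a genuine Hilbert space of functions rather than functions modulo constants. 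The component $\phi$ is harmonic on $U$.

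\textbf{Step 2: measurability of $\phi$.} I would show $\phi$ is determined by $h|_{\mathbb R^2\setminus U}$. For every test function $f$ supported in $\mathbb R^2\setminus\overline U$, the pairing $(\tilde h^U,f)=0$. Hence $h$ and $\phi$ agree off $U$ modulo constants, and $\phi$ is the unique harmonic extension into $U$ of those values. To make this rigorous I would check the covariance structure: $(\phi,g)$ for $g\in C_c^\infty(U)$ equals $(h,\mu_g)$, where $\mu_g$ is the balayage (harmonic measure sweep) of $g$ onto $\partial U$. This measure is a limit of test functions supported outside $U$, so it lies in the closed Gaussian span of $h|_{\mathbb R^2\setminus U}$. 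I expect this to be the main technical obstacle: one must show the swept measures have finite energy and are approximable in $\mathcal H'$. I would handle it by approximating $U$ from inside by smooth domains and passing to the limit.

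\textbf{Step 3: fixing the additive constant.} Pinning $h$ by requiring its unit circle average to vanish subtracts a random constant $c$ equal to the average of $\tilde h^U+\phi$ over $\partial\mathbb D$. I would set $h^U:=\tilde h^U-(\text{average of }\tilde h^U\text{ over }\partial\mathbb D\cap U)$ and absorb the remainder of the constant into $\phi$. Since a constant is harmonic, $\phi$ stays harmonic. The absorbed part, namely the average of $\phi$ over $\partial\mathbb D$ together with $\tilde h^U$ over $\partial \mathbb D\setminus U$ (where $\tilde h^U$ vanishes), is still determined by the outside field, so independence and measurability are preserved.

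If $U\cap\partial\mathbb D=\emptyset$, no correction to $\tilde h^U$ is needed, so $h^U=\tilde h^U$ is exactly a zero-boundary GFF. Because $\tilde h^U$ is independent of $\phi$, and $h|_{\mathbb R^2\setminus U}=\phi|_{\mathbb R^2\setminus U}$, it follows that $h^U$ is independent of $h|_{\mathbb R^2\setminus U}$.
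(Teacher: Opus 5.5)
\textbf{Comparison with the paper.} The paper gives no proof of this lemma; it quotes it from \cite[Lemma 2.2]{GMS} and \cite[Proposition 2.8]{MillerSheffield2017}. Your outline reconstructs the standard argument behind those citations, and it is correct in substance. Steps~1--2 (the splitting $\mathcal H=\mathcal H_0(U)\oplus\mathcal H_{\mathrm{harm}}(U)$ and the identification of the harmonic part) give the Markov property for $h$ modulo additive constants. Step~3 is the renormalization that turns this into the unit-circle-normalized statement. The citation spares the paper the measure-theoretic bookkeeping. Your self-contained version makes three things visible:
\begin{enumerate}[(a)]
\item where harmonic non-triviality of $\partial U$ enters;
\item why the correction is exactly the unit-circle average $L$ of the zero extension of $\tilde h^U$, so that $L=0$ when $U\cap\partial\mathbb D=\emptyset$;
\item why in general $h^U$ is only independent of $\phi$ and not of $h|_{\mathbb R^2\setminus U}$: off $U$ the normalized field equals $\phi-L$, and $L$ is correlated with $h^U=\tilde h^U-L$.
\end{enumerate}

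\textbf{A correction to Step~2.} For a general $U$ with harmonically non-trivial boundary, $\mathbb R^2\setminus\overline U$ can be empty (e.g.\ $U=\mathbb R^2\setminus[0,1]$). So neither the heuristic ``$\phi$ is the harmonic extension of its values off $\overline U$'' nor the sentence ``$\mu_g$ is a limit of test functions supported outside $U$'' is literally true. The phrase ``determined by $h|_{\mathbb R^2\setminus U}$'' has to be read as measurability with respect to the germ $\sigma$-algebra $\bigcap_V\sigma(h|_V)$, taken over open $V\supset\mathbb R^2\setminus U$. What you need is that $(h,\mu_g)$ is an $L^2$ limit of pairings with measures carried by shrinking neighbourhoods of $\mathbb R^2\setminus U$.

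Your inner exhaustion $U_n\uparrow U$ is the right device for this, with one correction: its approximants $\mu_g^{(n)}$ live on $\partial U_n\subset U$, not outside $U$. The convergence $(h,\mu_g^{(n)})=(\phi_n,g)\to(\phi,g)$ in $L^2$, where $\phi_n$ is the harmonic part for $U_n$, holds because the projections onto $\mathcal H_0(U_n)$ increase strongly to the projection onto $\mathcal H_0(U)$.

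Finally, finite energy of $\mu_g$ needs no approximation. Take $\int g=0$, which you need anyway since $\phi$ is only defined modulo constants at this stage. Then the potential of $\mu_g$ is the orthogonal projection of the potential of $g$ onto $\mathcal H_{\mathrm{harm}}(U)$, so the logarithmic energy of $\mu_g$ is at most that of $g$.
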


\subsection{The Log-Correlated Gaussian Field}

The log-correlated Gaussian field was defined in
Definition~\ref{def:LGF}. It is the $d$-dimensional analogue of the two-dimensional whole-plane GFF with logarithmic correlations in every
dimension $d\ge 2$.

In dimension $d = 2$, the LGF coincides modulo additive constants with the whole-plane GFF. For $d \geq 3$, the LGF is a different object from the standard GFF, as the standard GFF in $d \geq 3$ has covariance kernel $|x-y|^{2-d}$, while the LGF retains the logarithmic correlation structure $\log(1/|x-y|)$ at all scales \cite{duplantier2014logcorrelatedgaussianfieldsoverview, lodhia2016fractionalgaussianfieldssurvey}. Since $C_c^\infty(\mathbb R^d)\cap\{\int f=0\}$ is dense in $\mathcal S_0(\mathbb R^d)$, the LGF is determined by its covariance on smooth compactly supported functions with zero mean.

The LGF on $\mathbb{R}^d$ satisfies scale and translation invariance.

\begin{lem}\label{lem:scalingInvariancePrelim}
Let $h$ be an LGF on $\mathbb{R}^d$. For every $r > 0$ and $a \in \mathbb{R}^d$,
$$
h(r \cdot + a) \overset{d}{=} h(\cdot),
$$
where equality is in the sense of distributions modulo additive constants.
\end{lem}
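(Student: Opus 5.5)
We prove Lemma~\ref{lem:scalingInvariancePrelim} at the level of covariances. Both sides are centered Gaussian random distributions modulo additive constants. Such a law is determined by the covariance of the pairings against test functions in $\mathcal S_0(\mathbb R^d)$, or equivalently, by density, against $C_c^\infty(\mathbb R^d)\cap\{\int f=0\}$. So it suffices to check that for every $f_1,f_2\in\mathcal S_0(\mathbb R^d)$ the pairings of $h(r\cdot+a)$ with $f_1,f_2$ have the same covariance as those of $h$.

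First, make the pullback precise. Set $\tilde h:=h(r\cdot+a)$ and define it by duality:
$$
(\tilde h,f):=(h,T f),\qquad (Tf)(x):=r^{-d}f\bigl((x-a)/r\bigr).
$$
This is the formula obtained by the change of variables $x=ry+a$ when $h$ is a function. The map $T$ sends $\mathcal S(\mathbb R^d)$ to itself and preserves total integral, so it maps $\mathcal S_0$ into $\mathcal S_0$. Hence $\tilde h$ is again a well-defined Gaussian random tempered distribution modulo constants. Adding a constant to $h$ adds the same constant to $\tilde h$, since $T$ preserves $\int f$, so the statement modulo constants is consistent.

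Next, compute the covariance. Using Definition~\ref{def:LGF} and substituting $x=ru+a$, $y=rv+a$, whose Jacobians cancel the factors $r^{-d}$,
$$
\operatorname{Cov}\bigl[(\tilde h,f_1),(\tilde h,f_2)\bigr]
=\iint \log\frac{1}{r|u-v|}\,f_1(u)f_2(v)\,du\,dv .
$$
Split $\log\frac1{r|u-v|}=\log\frac1{|u-v|}-\log r$. The constant term contributes $-\log r\int f_1\int f_2=0$, because $f_1,f_2$ have mean zero. What remains is exactly the LGF covariance of $(f_1,f_2)$.

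Since centered Gaussian processes indexed by $\mathcal S_0$ with equal covariances have equal laws, $\tilde h\overset{d}{=}h$ modulo additive constants. The translation part is trivial, and the only genuine point is the scaling step: the $\log r$ term is killed precisely by the mean-zero condition. This is why the statement holds only modulo additive constants. I expect no real obstacle beyond checking that $T$ preserves $\mathcal S_0$ and that the Fubini/change of variables is justified, which follows from the local integrability of $\log|u-v|$ against Schwartz functions.
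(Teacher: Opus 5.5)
Your proposal is correct and matches the paper's proof: it uses the same duality definition $(h(r\cdot+a),f)=r^{-d}\bigl(h,f(r^{-1}(\cdot-a))\bigr)$ and the same substitution $x=ru+a$, $y=rv+a$. As in the paper, the $\log r$ term vanishes because $\int f_1=\int f_2=0$, and equality in law follows since both are centered Gaussian fields with the same covariance.
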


\begin{proof}
We interpret $h(r \cdot + a)$ as a distribution via
$$
(h(r \cdot + a), f) := r^{-d} (h, f(r^{-1}(\cdot - a))) \qquad \text{for } f \in \mathcal{S}_0(\mathbb{R}^d).
$$
It suffices to show that both fields have the same covariance. For any $f_1, f_2 \in \mathcal{S}_0(\mathbb{R}^d)$, making the change of variables $u = r^{-1}(x-a)$, $v = r^{-1}(y-a)$ gives
\begin{align*}
\operatorname{Cov}\bigl[(h(r \cdot + a), f_1), (h(r \cdot + a), f_2)\bigr]
&= r^{-2d} \iint \log \frac{1}{|x-y|} f_1(r^{-1}(x-a)) f_2(r^{-1}(y-a))   dx   dy \\
&= \iint \log \frac{1}{|r u - r v|} f_1(u) f_2(v)   du   dv \\
&= \iint \log \frac{1}{|u-v|} f_1(u) f_2(v)   du   dv \\
&= \operatorname{Cov}[(h,f_1),(h,f_2)],
\end{align*}
where the $\log r$ term vanishes because $\int f_i = 0$. Since both are centered Gaussian fields with the same covariance, the result follows.
\end{proof}

We fix the global additive constant of $h$ by requiring that the average of $h$ over the unit sphere is zero. As shown in \cite[Section 11.1]{lodhia2016fractionalgaussianfieldssurvey}, the spherical average of the LGF is well-defined. We will consider the field $\tilde h = h - h_1(0)$ where $h_1(0)$ is the unit spherical average of $h$. Because all quantities of interest in this paper are unchanged when a constant is added to the field, we will, by a slight abuse of notation, denote this normalized field again by $h$. This particular choice of normalization does not affect the set of thick points defined below, as it only shifts $h$ by a random constant. With this normalization fixed, the pairing $(h, \rho)$ is well-defined for any smooth, compactly supported function $\rho$ with $\int_{\mathbb{R}^d} \rho(x) dx = 1$.

\subsection{Mollification}\label{sec:mollification}

Since the LGF is a distribution, it is not defined pointwise. To study its local behavior, we regularize by averaging against mollifiers. While the field is initially defined on smooth test functions, we will need to consider mollifiers that are measures rather than smooth functions (for example, the circle average in two dimensions). The following approximation argument shows that the pairing $(h, \rho)$ can be extended to any measure $\rho$ satisfying a mild energy condition.

\begin{defn}\label{def:energyCondition}
A nonnegative Radon measure $\rho$ on $\mathbb{R}^d$ has \emph{finite logarithmic energy} if
\begin{equation}\label{eq:energyCondition}
\iint_{\mathbb{R}^d \times \mathbb{R}^d} \left|\log \frac{1}{|x-y|}\right| \rho(dx)\rho(dy) < \infty.
\end{equation}
\end{defn}

\begin{lem}\label{lem:measureApproximation}
Let $\rho$ be a nonnegative Radon measure satisfying the finite logarithmic energy condition \eqref{eq:energyCondition} and $\rho(\mathbb{R}^d) = 1$. Let $\psi$ be a smooth, nonnegative, symmetric function supported in $B(0,1)$ with $\int_{\mathbb{R}^d} \psi(x) dx = 1$. For $n \geq 1$, define
$$
\rho_n(y) = (\rho * \psi_n)(y) = \int_{\mathbb{R}^d} \psi_n(y - z) \rho(dz), \qquad \psi_n(z) = n^d \psi(nz).
$$
Then each $\rho_n$ is a smooth, compactly supported function with $\int_{\mathbb{R}^d} \rho_n(x) dx = \rho(\mathbb{R}^d)$. Furthermore, for the log-correlated Gaussian field $h$ on $\mathbb{R}^d$, the pairings $(h,\rho_n)$ are well defined and $\{(h,\rho_n)\}_{n\geq 1}$ is a Cauchy sequence in $L^2(\mathbb{P})$.
\end{lem}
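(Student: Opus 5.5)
The plan is to check the elementary properties of $\rho_n$ directly, and then prove the Cauchy property by writing $\mathbb E[((h,\rho_n)-(h,\rho_m))^2]$ as a log-energy of $\rho_n-\rho_m$ and showing that it tends to zero by dominated convergence.

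\textbf{Elementary properties.} Smoothness of $\rho_n$ follows by differentiating under the integral sign: $\psi_n$ is smooth with bounded derivatives, and $\rho$ is a finite measure. Since $\rho$ is a finite Radon measure on $\mathbb R^d$, it is tight. For the compact support claim I would use the standing assumption $\operatorname{supp}\rho\subseteq\overline{B(0,1)}$; in general one would truncate, or work with the energy-controlled difference below. Under that assumption, $\operatorname{supp}\rho_n\subseteq\overline{B(0,1+1/n)}$. Fubini gives $\int\rho_n=\rho(\mathbb R^d)\int\psi_n=\rho(\mathbb R^d)$.

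\textbf{Well-definedness of the pairings.} The pairing $(h,\rho_n)$ is understood with the normalization $h\mapsto h-h_1(0)$, where $h_1(0)$ is the unit spherical average. Concretely, $(h,\rho_n):=(h,\rho_n-\lambda)$, with $\lambda$ the uniform probability measure on the unit sphere. Mollifying $\lambda$ shows that $\rho_n-\lambda$ is a limit of elements of $\mathcal S_0$, so this is meaningful. Its variance is
$$
\iint\log\tfrac{1}{|x-y|}\,(\rho_n-\lambda)(dx)(\rho_n-\lambda)(dy),
$$
which is finite because $\rho_n$ is bounded and compactly supported and $\lambda$ has finite log energy (the log is locally integrable against surface measure).

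\textbf{Cauchy property.} Because $\int(\rho_n-\rho_m)=0$, the normalization cancels and
$$
\mathbb E\bigl[((h,\rho_n)-(h,\rho_m))^2\bigr]=E(\rho_n,\rho_n)-2E(\rho_n,\rho_m)+E(\rho_m,\rho_m),
$$
where $E(\mu,\nu)=\iint\log\frac1{|x-y|}\,\mu(dx)\nu(dy)$. Writing $\rho_n=\rho*\psi_n$ and using Fubini,
$$
E(\rho_n,\rho_m)=\iint L_{n,m}(x-y)\,\rho(dx)\rho(dy),\qquad L_{n,m}=\log\tfrac1{|\cdot|}*\psi_n*\tilde\psi_m ,
$$
with $\tilde\psi_m(z)=\psi_m(-z)$, which equals $\psi_m$ by symmetry. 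It therefore suffices to show
$$
\iint L_{n,m}(x-y)\,\rho(dx)\rho(dy)\;\longrightarrow\;\iint\log\tfrac1{|x-y|}\,\rho(dx)\rho(dy)\quad\text{as } n,m\to\infty .
$$
All three energy terms then converge to the same finite limit $E(\rho,\rho)$, and the difference tends to zero.

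\textbf{Pointwise convergence and domination.} For $u\neq0$, continuity of $\log$ away from $0$ gives $L_{n,m}(u)\to\log\frac1{|u|}$. The diagonal $\{x=y\}$ has $\rho\otimes\rho$-measure zero by the finite energy condition, since $|\log|$ is infinite there. The main obstacle is the domination near the diagonal, i.e. showing
$$
L_{n,m}(u)\le \log\tfrac{1}{|u|}+C .
$$
I would use superharmonicity-type averaging. The function $\log\frac1{|\cdot|}$ is superharmonic in $d=2$, but for $d\ge3$ one has $\Delta\log\frac1{|x|}=-(d-2)/|x|^2\le0$, so it is superharmonic in every dimension $d\ge2$. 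Averaging a superharmonic function against a radial probability density gives a value no larger than the centre value, which yields exactly this upper bound when $\psi$ is radial. If $\psi$ is merely symmetric, I would instead use the elementary bound
$$
\int\log\tfrac{1}{|u-v|}\,\mu(dv)\le \log\tfrac{1}{|u|}+C\quad\text{for } |u|\ge 2\cdot\operatorname{diam}(\operatorname{supp}\mu),
$$
together with the bound $L_{n,m}\le C+\log(n\wedge m)\le C+\log\frac{1}{|u|}$ when $|u|\le 2/(n\wedge m)$. Here the first inequality holds because $\psi_n*\psi_m$ has bounded density at scale $1/(n\wedge m)$, so its log-potential is at most $\log(n\wedge m)+C$.

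\textbf{Lower bound and conclusion.} From below, on the compact set $\operatorname{supp}\rho\times\operatorname{supp}\rho$ (with $|x-y|\le 2$) one has $L_{n,m}\ge -\log 4$. Hence $|L_{n,m}(x-y)|\le\bigl|\log\frac1{|x-y|}\bigr|+C$, which is integrable by the finite energy condition and $\rho(\mathbb R^d)=1$. Dominated convergence then yields the limit, and the Cauchy property follows.
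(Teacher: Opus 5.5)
The paper gives no proof of its own here; it only refers to Berestycki--Powell. Your argument is the standard one and is correct: write $\mathbb E[((h,\rho_n)-(h,\rho_m))^2]$ as the log-energy of $\rho_n-\rho_m$, express each energy as $\iint L_{n,m}(x-y)\,\rho(dx)\rho(dy)$, and apply dominated convergence with $|L_{n,m}(u)|\le \bigl|\log\frac{1}{|u|}\bigr|+C$ for $|u|\le 2$. The only tidying needed is that your global bound $L_{n,m}\le \log(n\wedge m)+C$ should be used on all $|u|\le c/(n\wedge m)$, with $c$ twice the support radius of $\psi_n*\psi_m$, rather than only on $|u|\le 2/(n\wedge m)$; this changes only constants. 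You are also right that ``$\rho_n$ is compactly supported'' needs $\rho$ itself compactly supported, which the statement omits but which holds for admissible mollifiers and their translates and dilates $\rho_{z,\epsilon}$, where the lemma is used.
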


See \cite[Section 3.2]{berestycki2025gaussianfreefieldliouville} for details of the proof. Using Lemma \ref{lem:measureApproximation}, we can extend the pairing $(h, \rho)$ to any measure satisfying the energy condition.

\begin{defn}\label{def:measureExtension}
Let $\rho$ be a nonnegative Radon measure satisfying the finite logarithmic energy condition \eqref{eq:energyCondition} and $\rho(\mathbb{R}^d)=1$. Let $\rho_n$ be the smooth approximations from Lemma \ref{lem:measureApproximation}. Define
$$
(h, \rho) := \lim_{n \to \infty} (h, \rho_n) \quad \text{in } L^2(\mathbb{P}),
$$
where the pairing $(h, \rho_n)$ on the right is well-defined because $\rho_n$ is smooth and $(h,\rho_n)$ is Cauchy. This definition is independent of the choice of $\psi$.
\end{defn}

We recall from Definition \ref{def:mollifier} the class of admissible mollifiers used throughout the paper. Condition (iii) is precisely the finite logarithmic energy condition of Definition \ref{def:energyCondition}, so the pairing $(h, \rho)$ is well-defined by Definition \ref{def:measureExtension}.

For $\epsilon > 0$, define the scaled measure $\rho_\epsilon(A) = \rho(A/\epsilon)$. For $x \in \mathbb{R}^d$, let $\rho_{x,\epsilon}$ be the translate of $\rho_\epsilon$ by $x$, so that
$$
\int f(y) \rho_{x,\epsilon}(dy) = \int f(x + \epsilon y) \rho(dy)
$$
for bounded measurable $f$. The mollified field is then defined by
\begin{equation}\label{eq:mollifiedField}
h_\epsilon^\rho(x) := (h, \rho_{x,\epsilon}).
\end{equation}
We will also write $\int h(y)\rho_{x,\epsilon}(dy) = h*\rho_\epsilon(x) = (h,\rho_{x,\epsilon})$ with the understanding that this is an abuse of notation justified by Definition \ref{def:measureExtension}. 

If $\rho$ satisfies Condition \eqref{eq:secondMoment} for some $p\in(0,1]$, then Lemma \ref{lem:jointContinuity} gives a modification of
the process $(z,\epsilon)\longmapsto h_\epsilon^\rho(z)$ which is jointly continuous in $(z,\epsilon)$ on compact subsets of $\mathbb R^d$. Throughout the paper, whenever Condition \eqref{eq:secondMoment} is assumed, we
replace $h_\epsilon^\rho(z)$ with such a continuous modification.

\section{Regularity of Mollifiers}\label{sec:mollifiers}

In this section we prove Theorem~\ref{thm:sufficient} and give a simplified criterion for radial mollifiers.

\subsection{General Mollifiers}

\begin{proof}[Proof of Theorem~\ref{thm:sufficient}]
It suffices to prove
$$
\mathbb E\bigl[\bigl(h_\epsilon^\rho(z)-h_r^\rho(w)\bigr)^2\bigr]
\leq C\left(\frac{|z-w|+|\epsilon-r|}{\epsilon\wedge r}\right)^p,
$$
because $|z-w|+|\epsilon-r|\leq \sqrt2 |(z,\epsilon)-(w,r)|$.
Assume $r\leq\epsilon$ and $\frac{r}{\epsilon}\geq \frac{1}{2}$. Define
$$
\lambda:=\frac{r}{\epsilon}\in\left[\frac{1}{2},1\right], \qquad 
\xi:=\frac{z-w}{r}.
$$
The mollified field is $h_\epsilon^\rho(z)=\int h(x)\rho_{z,\epsilon}(dx)$.
Since both $\rho_{z,\epsilon}$ and $\rho_{w,r}$ are probability measures, their difference $\rho_{z,\epsilon}-\rho_{w,r}$ has total mass zero, so
applying the covariance formula for the LGF on $\mathbb{R}^d$ gives
\begin{equation}\label{firstcov}
I:= \mathbb E\bigl[\bigl(h_\epsilon^\rho(z)-h_r^\rho(w)\bigr)^2\bigr]
= \iint \log\frac{1}{|x-y|}
(\rho_{z,\epsilon}-\rho_{w,r})(dx)
(\rho_{z,\epsilon}-\rho_{w,r})(dy).
\end{equation}

We perform a change of variables with $x=w+r u$ and $y=w+r v$. For $\rho_{w,r}$, we have by definition $\int f(x)\,\rho_{w,r}(dx) = \int f(w+r u)\rho(du)$. So under this change $\rho_{w,r}$ becomes simply $\rho$. 

For $\rho_{z,\epsilon}$, we use $z = w + r\xi$ and $\epsilon = r/\lambda$.
$$
\int f(x)\rho_{z,\epsilon}(dx) = \int f(z + \epsilon y)\rho(dy) = \int f\!\left(w + r\xi + \frac{r}{\lambda}\,y\right)\rho(dy)$$
$$
\int f(x)\rho_{z,\epsilon}(dx)= \int f \bigl(w + r(\xi + \lambda^{-1} y)\bigr)\rho(dy)= \int f(w + r u)\rho_{\xi,\lambda^{-1}}(du),
$$
So after the change of variables $x = w + r u$,
the measure $\rho_{z,\epsilon}$ turns into $\rho_{\xi,\lambda^{-1}}$. Since $|x-y|=r|u-v|$, \eqref{firstcov} becomes 
$$
I = 
\iint \log\frac{1}{|u-v|}(\rho_{\xi,\lambda^{-1}}-\rho)(du)(\rho_{\xi,\lambda^{-1}}-\rho)(dv)
$$
where the $-\log r$ disappears because $\iint -\log r(\rho_{\xi,\lambda^{-1}}-\rho)(du)(\rho_{\xi,\lambda^{-1}}-\rho)(dv) = 0$. Then, we have $I = I_1 + I_2 - 2 I_3$, where 
$$
\begin{aligned}
I_1&:=\iint \log\frac{1}{|u-v|}\rho_{\xi,\lambda^{-1}}(du)\rho_{\xi,\lambda^{-1}}(dv),\\
I_2&:=\iint \log\frac{1}{|u-v|} \rho(du)\rho(dv),\\
I_3&:=\iint \log\frac{1}{|u-v|} \rho_{\xi,\lambda^{-1}}(du)\rho(dv).
\end{aligned}
$$
We will estimate each of these integrals. Note that $I_2 < \infty$ is a constant dependent on $\rho$ by condition (iii) of Definition \ref{def:mollifier}. We can compute $I_1$ by the definition of $\rho_{\xi,\lambda^{-1}}$. 
$$
I_1
=\iint \log\frac{1}{|\xi+\lambda^{-1}s-(\xi+\lambda^{-1}t)|} \rho(ds)\rho(dt)
=\iint \log\frac{\lambda}{|s-t|} \rho(ds)\rho(dt)
=\log\lambda+I_2.
$$
Define
$$
U(x) := \int_{\mathbb{R}^d} \log\frac{1}{|x-y|}\rho(dy).
$$
We will estimate $I_3$.
$$
\begin{aligned}
I_3
&=\iint \log\frac{1}{|\xi+\lambda^{-1}s-v|} \rho(ds)\rho(dv)\\
&=\iint \left[\log\lambda+\log\frac1{|\lambda\xi+s-\lambda v|}\right]\rho(ds)\rho(dv)\\
&=\log\lambda+\int U(\lambda(v-\xi)) \rho(dv).
\end{aligned}
$$
Substituting into $I=I_1+I_2-2I_3$ gives
$$
I=(\log \lambda + I_2) + (I_2) - 2\left[\log \lambda + \int U(\lambda(v - \xi))\rho(dv)\right]$$
Now, using that $I_2 = \int U(v)\rho(dv)$
\begin{equation}\label{IntegralEstimatetwo}
I = -\log\lambda+2\int \bigl[U(v)-U(\lambda(v-\xi))\bigr] \rho(dv)
\end{equation}
We show that $U$ is $p$-H\"older continuous. Let 
$C_0 :=\sup_{a\in B(0,1+\epsilon_0)}\int_{\mathbb R^d}\frac{1}{|a-y|^p}\rho(dy)$. First, note that for any $a\in\mathbb R^d$, either $a \in \overline{B(0,1)}$, or $a \not \in \overline{B(0,1)}$. If $a \in \overline{B(0,1)}$, then by hypothesis, $\int _{\mathbb{R}^d}\frac{1}{|a-y|^p}\rho(dy) \leq C_0 <\infty$, and so $\rho(\{a\}) = 0$. If $a \not \in \overline{B(0,1)}$, then $a \not \in \operatorname{supp}\rho$, so $\rho(\{a\}) = 0$. In particular, this means that for any fixed $a,b \in \mathbb{R}^d$, the set $\{y \in \mathbb{R}^d: |a-y|=0 \text{ or } |b-y| = 0\}$ has $\rho$-measure zero. 

Next, we show that
$$
R(a):=\int_{\mathbb{R}^d}\frac{1}{|a-y|^p}\rho(dy)
$$
is globally bounded. If $a \in B(0,1+\epsilon_0)$, then $R(a)\leq C_0 < \infty$ by definition of $C_0$. If
$a\notin B(0,1+\epsilon_0)$, then $|a|>1+\epsilon_0$. By the reverse triangle inequality, for every
$y\in\operatorname{supp}\rho\subseteq\overline{B(0,1)},$ we have  $|a-y|\geq |a|-1>\epsilon_0$. Therefore,
$$
R(a) \leq \int_{\mathbb{R}^d}\frac{1}{\epsilon_0^p}\rho(dy)=\frac{1}{\epsilon_0^p}<\infty.
$$
Combining the two cases, taking the constant $C_1:=\sup_{a\in\mathbb{R}^d}R(a) \leq
\max\left(C_0,\frac{1}{\epsilon_0^p}\right)<\infty$, we have that
\begin{equation}\label{eq:Rbound}
R(a)=\int_{\mathbb{R}^d}\frac{1}{|a-y|^p}\rho(dy)
\leq C_1
\qquad\text{for all } a\in\mathbb{R}^d.
\end{equation}

Now take any $a,b\in\mathbb R^d$. For $\rho$-almost every $y$, both $|a-y|>0$ and $|b-y|>0$. Therefore, we can apply the estimate for $r_1,r_2 > 0$, 
$$|\log r_1-\log r_2|\leq C_2\frac{|r_1-r_2|^p}{\min(r_1,r_2)^p}$$
for some constant $C_2$ depending only on $p$. Using $r_1=|a-y|$ and $r_2=|b-y|$, we get
$$
|U(a)-U(b)|
\leq \int C_2\frac{\Bigl| |a-y| - |b-y|\Bigr|^p}{\min{(|a-y|,|b-y|)^p}}\rho(dy)$$
Using the reverse triangle inequality, \eqref{eq:Rbound}, and $\frac{1}{\min(|a-y|,|b-y|)^p} \leq \frac{1}{|a-y|^p} + \frac{1}{|b-y|^p}$, we get
$$|U(a)-U(b)|\leq C_2|a-b|^p \int \left( \frac{1}{|a-y|^p} + \frac{1}{|b-y|^p}\right)\rho(dy) \leq 2C_2C_1|a-b|^p 
$$
Next, we apply the $p$-H\"older continuity of $U$ to
\eqref{IntegralEstimatetwo}. For every
$v\in\operatorname{supp}\rho\subset \overline{B(0,1)}$, the
$p$-H\"older estimate with $C_3 := 2C_2C_1$ gives
\begin{equation}\label{eq:holderDifference}
|U(v)-U(\lambda(v-\xi))|
\leq C_3|v-\lambda(v-\xi)|^p
=C_3|(1-\lambda)v+\lambda\xi|^p.
\end{equation}
Since $|v|, \lambda\leq 1$,
\begin{equation}\label{eq:holderBasicBound}
|(1-\lambda)v+\lambda\xi|^p
\leq \left((1-\lambda)|v|+\lambda|\xi|\right)^p
\leq \left(1-\lambda+|\xi|\right)^p.
\end{equation}
Observe also that
\begin{equation}\label{eq:linearComparison}
1-\lambda+|\xi|\leq |\xi|+\frac{1}{\lambda}-1,
\end{equation}
because $1-\lambda \leq 1/\lambda-1$ for $0<\lambda\leq 1$.
Finally, since $\lambda \in [\frac{1}{2},1]$,
\begin{equation}\label{eq:loglambdaBound}
-\log\lambda \leq C_4(1-\lambda)^p
\leq C_4\left(\frac{1}{\lambda}-1\right)^p
\end{equation}
for some constant $C_4$. Inserting \eqref{eq:holderDifference},
\eqref{eq:holderBasicBound}, \eqref{eq:linearComparison},
and \eqref{eq:loglambdaBound} into
\eqref{IntegralEstimatetwo} gives
\begin{equation}\label{integralestimatethree}
\begin{aligned} 
|I|
&\leq (-\log\lambda)+2\int |U(v)-U(\lambda(v-\xi))|\rho(dv)\\
&\leq C_4\left(\frac{1}\lambda-1\right)^p+2\int C_3\left(|\xi|+\frac{1}\lambda-1\right)^p\rho(dv)\\
&\leq C_4\left(|\xi|+\frac{1}\lambda-1\right)^p+2C_3\left(|\xi|+\frac1\lambda-1\right)^p\\
&= C\left(|\xi|+\frac{1}\lambda-1\right)^p,
\end{aligned}
\end{equation}
where $C:=C_4+2C_3$. Recall the definitions $\xi=(z-w)/r$ and $1/\lambda=\epsilon/r$. 
$$
|\xi|+\frac1\lambda-1
=\frac{|z-w|}{r}+\frac{\epsilon-r}{r}
=\frac{|z-w|+|\epsilon-r|}{r}.
$$
Since $r=\epsilon\wedge r$, this is exactly the desired bound, which completes the proof.

\end{proof}

\begin{exmp}\label{exmp:bump}
If $\rho(dx)=f(x)dx$ where $f$ is a bounded, nonnegative function supported in $\overline{B(0,1)}$ with $\int f(x)dx=1$, then $\int \frac{1}{|a-y|^p}f(y)dy$ is uniformly bounded for all $p\in(0,1]$. Therefore, Theorem \ref{thm:sufficient} applies, and $\rho$ satisfies Condition~\eqref{eq:secondMoment} for all $p\in(0,1]$.
\end{exmp}

\subsection{Radial Mollifiers}
\begin{defn}\label{def:radial}
An admissible mollifier $\rho$ on $\mathbb{R}^d$ is called \emph{radial} if it is rotationally invariant.  Equivalently, there exists a probability measure $\nu$ on $[0,1]$ such that
$$
\rho = \int_0^1 \sigma_t  \nu(dt),
$$
where $\sigma_t$ is the normalized uniform measure on the sphere of radius $t$.  
\end{defn}

For radial mollifiers, checking the hypothesis of
Theorem~\ref{thm:sufficient} can be simplified. 

\begin{prop}\label{prop:radialSufficient}
Let $p\in(0,1]$, and let $\rho$ be a radial admissible mollifier on
$\mathbb{R}^d$. If
$$
\int_{\mathbb{R}^d}\frac{1}{|x|^p}\rho(dx)<\infty,
$$
then $\rho$ satisfies the hypothesis of Theorem \ref{thm:sufficient} with exponent $q$ for every $q\in(0,p]$, except in the case $d=2$ and $p=1$, where it holds for every $q\in(0,1)$. 
\end{prop}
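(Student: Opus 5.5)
The plan is to reduce to a uniform estimate for the potential of a single sphere and then integrate against $\nu$. Write $\rho=\int_0^1\sigma_t\,\nu(dt)$ as in Definition~\ref{def:radial}. First, since $\int |x|^{-p}\rho(dx)=\int_0^1 t^{-p}\nu(dt)<\infty$, we get $\nu(\{0\})=0$, so $\sigma_t$ is a genuine sphere measure for $\nu$-a.e.\ $t$. For $q\in(0,p]$ and $|x|\le 1$ we have $|x|^{-q}\le |x|^{-p}$, hence $\int_0^1 t^{-q}\nu(dt)\le \int_0^1 t^{-p}\nu(dt)<\infty$. By Fubini (everything is nonnegative),
$$
\int_{\mathbb R^d}\frac{1}{|a-y|^q}\rho(dy)=\int_0^1\Phi_q(a,t)\,\nu(dt),\qquad \Phi_q(a,t):=\int\frac{1}{|a-y|^q}\sigma_t(dy).
$$
So it suffices to show $\Phi_q(a,t)\le C\,t^{-q}$ with $C$ independent of $a\in\mathbb R^d$ and $t\in(0,1]$. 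This gives the hypothesis of Theorem~\ref{thm:sufficient} for every $\epsilon_0$, and in fact a global bound in $a$.

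By scaling, $\Phi_q(a,t)=t^{-q}\varphi_q(a/t)$, where $\varphi_q(b):=\int_{S^{d-1}}|b-\theta|^{-q}\,\sigma_1(d\theta)$. The key step is therefore to show $\sup_{b\in\mathbb R^d}\varphi_q(b)<\infty$. Away from the sphere this is easy: for $|b|\ge 2$ or $|b|\le 1/2$, we have $|b-\theta|\ge 1/2$, so $\varphi_q(b)\le 2^q$.

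For $1/2\le|b|\le 2$, write $b=s\theta_0$ with $\theta_0\in S^{d-1}$. I would use the elementary inequality $|s\theta_0-\theta|\ge c\,|\theta_0-\theta|$ for $s\in[1/2,2]$, $\theta\in S^{d-1}$. This holds because the closest point of the ray $\{s\theta_0\}$ to $\theta$ is comparable to the geodesic separation; one can check it by squaring: $|s\theta_0-\theta|^2=(s-1)^2+s|\theta_0-\theta|^2$. This reduces matters to bounding $\int_{S^{d-1}}|\theta_0-\theta|^{-q}\sigma_1(d\theta)$. In local coordinates on the $(d-1)$-dimensional sphere, this behaves like $\int_{B^{d-1}(0,1)}|u|^{-q}\,du$, which is finite exactly when $q<d-1$.

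This integrability threshold is the main (and only delicate) point. For $d\ge 3$ we have $q\le p\le 1<2\le d-1$, so every $q\in(0,p]$ is allowed. For $d=2$ the condition $q<1$ excludes precisely $q=1$, and this matters only when $p=1$. There the bound genuinely fails: $\int_{S^1}|\theta_0-\theta|^{-1}d\theta$ diverges logarithmically. This matches the stated exception.

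Combining the steps, $\int|a-y|^{-q}\rho(dy)\le \bigl(\sup_b\varphi_q(b)\bigr)\int_0^1 t^{-q}\nu(dt)<\infty$ uniformly in $a$, and Theorem~\ref{thm:sufficient} then applies with exponent $q$.
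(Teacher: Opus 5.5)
Your proposal is correct, and its overall structure matches the paper's. Both write $\rho=\int_0^1\sigma_t\,\nu(dt)$, use scaling to reduce to the uniform single-sphere bound $\sup_{b\in\mathbb R^d}\int_{S^{d-1}}|b-\theta|^{-q}\sigma(d\theta)<\infty$, and then integrate $t^{-q}\le t^{-p}$ against $\nu$. The difference is in how that uniform bound is proved.

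The paper writes the potential via the layer-cake formula $\int_0^\infty q r^{-q-1}\sigma(S^{d-1}\cap B(x,r))\,dr$ and applies the upper Ahlfors bound $\sigma(S^{d-1}\cap B(u,t))\le At^{d-1}$. It gets uniformity in $x$ off the sphere from the inclusion $S^{d-1}\cap B(x,r)\subseteq S^{d-1}\cap B(u,2r)$ for some $u\in S^{d-1}\cap B(x,r)$. This needs no case analysis, and it works verbatim for any measure satisfying an upper $s$-dimensional Ahlfors bound with $s>q$.

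You instead dispose of $|b|\notin[1/2,2]$ trivially and use the exact identity $|s\theta_0-\theta|^2=(s-1)^2+s|\theta_0-\theta|^2$ to reduce to a base point on the sphere. This is more explicit, and it makes transparent that $b\in S^{d-1}$ is the worst case.

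Your final step, that ``in local coordinates this behaves like $\int_{B^{d-1}(0,1)}|u|^{-q}\,du$,'' is stated heuristically. To make it rigorous you need one of two things:
\begin{enumerate}[(a)]
\item a cap estimate $\sigma(S^{d-1}\cap B(\theta_0,r))\lesssim r^{d-1}$, which is exactly the paper's Ahlfors bound; or
\item the explicit law of $\theta_0\cdot\theta$, whose density is proportional to $(1-x^2)^{(d-3)/2}$ on $[-1,1]$; since $|\theta_0-\theta|=\sqrt{2(1-x)}$, the integral is finite if and only if $q<d-1$.
\end{enumerate}
So the two proofs ultimately rest on the same geometric fact about spherical caps. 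Your explicit observation that the hypothesis forces $\nu(\{0\})=0$ is a point the paper leaves implicit.
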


\begin{proof}
Let $q$ be an exponent allowed by the statement. In particular, $q\le p$, and $d-2-q>-1$. Let $\sigma_t$ be the normalized uniform measure on the sphere of radius $t$, and let $\sigma:=\sigma_1$ denote the normalized uniform measure on the unit sphere $S^{d-1}\subset\mathbb R^d$. We first claim that there exists $C_{d,q}<\infty$ such that 
\begin{equation}\label{eq:sphereUniformBound}
\int_{S^{d-1}}\frac{1}{|x-u|^q} \sigma(du) \leq C_{d,q}
\qquad\text{for all }x\in\mathbb{R}^d.
\end{equation}

For $x\in\mathbb{R}^d$,
$$\int_{S^{d-1}}\frac{1}{|x-u|^q}\sigma(du)
=
\int_0^\infty q r^{-q-1}\sigma\bigl(S^{d-1}\cap B(x,r)\bigr)dr.$$
For $r\geq 1$, the trivial bound $\sigma\bigl(S^{d-1}\cap B(x,r)\bigr)\leq
\sigma(S^{d-1})=1 $ gives
$$
\int_1^\infty q r^{-q-1}\sigma\bigl(S^{d-1}\cap B(x,r)\bigr) dr\leq \int_1^\infty q r^{-q-1}dr = 1.
$$
There is a constant $A\ge 1$ depending only on $d$ such that, for every $u\in S^{d-1}$ and every
$0\leq t<\operatorname{diam}(S^{d-1})=2$,
\begin{equation}\label{ahlfors}
A^{-1}t^{d-1} \leq \sigma\bigl(S^{d-1}\cap B(u,t)\bigr) \leq A t^{d-1}.
\end{equation}
For $0<r<1$, if $S^{d-1}\cap B(x,r)$ is nonempty, choose
$$
u\in S^{d-1}\cap B(x,r).
$$
Then $S^{d-1}\cap B(x,r) \subseteq S^{d-1}\cap B(u,2r)$, so by \eqref{ahlfors},
$$
\sigma\bigl(S^{d-1}\cap B(x,r)\bigr)\leq \sigma\bigl(S^{d-1}\cap B(u,2r)\bigr) \leq A(2r)^{d-1}=A' r^{d-1}.
$$
This estimate is uniform in $x\in\mathbb{R}^d$. Therefore,
$$
\int_0^1 q r^{-q-1} \sigma\bigl(S^{d-1}\cap B(x,r)\bigr) dr
\leq A' q \int_0^1 r^{d-2-q} dr < \infty,
$$
because $d-2-q>-1$. Thus, we have proven \eqref{eq:sphereUniformBound}.

Write $\rho=\int_0^1\sigma_t\nu(dt)$ as in Definition~\ref{def:radial}. Now take $a\in \mathbb{R}^d$ and $t\in(0,1]$. By scaling and \eqref{eq:sphereUniformBound} applied with $x=a/t$, we get
$$
\int_{S^{d-1}}\frac{1}{|a-tu|^q}
\sigma(du)=t^{-q}\int_{S^{d-1}} \frac{1}{|a/t-u|^q}\sigma(du)\le C_{d,q}t^{-q}.
$$
Consequently,
$$
\int_{\mathbb{R}^d}\frac{1}{|a-y|^q}\rho(dy)
=\int_0^1\left(\int_{S^{d-1}}\frac{1}{|a-tu|^q}\sigma(du)\right)\nu(dt) \le
C_{d,q}\int_0^1 t^{-q}\nu(dt).
$$
Since $0<t\le 1$ and $q\leq p$, we have $t^{-q}\le t^{-p}$ and 
$$
\int_0^1t^{-q}\nu(dt) \leq \int_0^1t^{-p}\nu(dt)
=\int_{\mathbb{R}^d}\frac{1}{|x|^p}\rho(dx)
<\infty.
$$
This gives
$$
\sup_{a\in \mathbb{R}^d}\int_{\mathbb{R}^d}\frac{1}{|a-y|^q}\rho(dy)<\infty,
$$
which implies that $\rho$ satisfies the hypothesis of Theorem \ref{thm:sufficient} with any $\epsilon_0 >0$ and the exponent $q$.
\end{proof}

As an application, the spherical average mollifier satisfies Condition~\eqref{eq:secondMoment} for some $p\in(0,1]$. Let $\sigma$ denote the normalized surface measure on the unit sphere $S^{d-1}\subset\mathbb{R}^d$, for $d \geq 2$.

\begin{cor}\label{cor:sphereAverage}
For every $d\ge 2$, the spherical average $\sigma$ satisfies Condition~\eqref{eq:secondMoment} for some $p\in(0,1]$.
\end{cor}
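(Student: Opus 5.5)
The plan is to apply Proposition~\ref{prop:radialSufficient} to $\rho=\sigma$ and then Theorem~\ref{thm:sufficient}. First I check that $\sigma$ is a radial admissible mollifier in the sense of Definition~\ref{def:radial}. It is rotationally invariant and equals $\int_0^1\sigma_t\,\nu(dt)$ with $\nu=\delta_1$. It is supported on $S^{d-1}\subseteq\overline{B(0,1)}$ and has total mass one.

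The only admissibility condition needing an argument is the finite logarithmic energy (iii). I will reuse estimate \eqref{eq:sphereUniformBound} from the proof of Proposition~\ref{prop:radialSufficient}. It gives $\sup_{x}\int_{S^{d-1}}|x-u|^{-q}\sigma(du)\le C_{d,q}$ for any $q\in(0,1)$ with $d-2-q>-1$, which holds for every $d\ge2$. Since $|x-u|\le 2$ on the sphere, the positive part of $\log(1/|x-u|)$ is bounded by $C_q|x-u|^{-q}$, and the negative part is bounded by $\log 2$. Integrating in $\sigma(dx)$ then gives $\iint|\log(1/|x-u|)|\,\sigma(du)\sigma(dx)<\infty$.

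Next I verify the hypothesis of Proposition~\ref{prop:radialSufficient}. Take $p=1/2$, say. Then $\int|x|^{-p}\sigma(dx)=1<\infty$, because $|x|=1$ on the support. Since $p<1$, the exceptional case $d=2$, $p=1$ does not arise. The proposition therefore shows that $\sigma$ satisfies the hypothesis of Theorem~\ref{thm:sufficient} with exponent $q=p=1/2$ and any $\epsilon_0>0$. Theorem~\ref{thm:sufficient} then yields Condition~\eqref{eq:secondMoment} with $p=1/2\in(0,1]$.

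I expect no real obstacle. The only point requiring care is admissibility condition (iii). In $d=2$ the spherical measure is one-dimensional, so the logarithmic singularity must be checked, and this is handled by the uniform $q$-moment bound \eqref{eq:sphereUniformBound} with $q<1$.
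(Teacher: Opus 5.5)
Your proposal is correct and follows the paper's route: you identify $\sigma$ as the radial mollifier with $\nu=\delta_1$, apply Proposition~\ref{prop:radialSufficient}, and then apply Theorem~\ref{thm:sufficient}. There are two minor differences: you take $p=1/2$ to sidestep the exceptional case $d=2$, $p=1$ (the paper takes $p=1$ and obtains every $q\in(0,1)$), and you explicitly verify admissibility condition (iii) via \eqref{eq:sphereUniformBound}, a step the paper leaves implicit.
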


\begin{proof}
The spherical average is the radial mollifier corresponding to $\nu=\delta_1$. 
$$
\int_{\mathbb{R}^d}\frac{1}{|x|}\sigma(dx)=\int_{S^{d-1}}1\,\sigma(dx)=1<\infty.
$$
Applying Proposition~\ref{prop:radialSufficient} with $p=1$, we obtain that for every $q\in(0,1)$, $\sigma$ satisfies the hypothesis of Theorem~\ref{thm:sufficient} with exponent $q$, and so $\sigma$ satisfies Condition~($A_q$) for every $q\in(0,1)$.
\end{proof}

The following is a useful fact that applies to the mollifier classes discussed above.

\begin{prop}\label{prop:linearCombination}
Let $p \in (0,1]$, and let $\rho_1,\dots,\rho_n$ be admissible mollifiers, each satisfying Condition \eqref{eq:secondMoment}. Let $c_1,\dots,c_n\geq 0$ with $\sum_{i=1}^n c_i = 1$, and define $\rho := \sum_{i=1}^n c_i \rho_i$. 
Then $\rho$ is an admissible mollifier and satisfies Condition \eqref{eq:secondMoment}.
\end{prop}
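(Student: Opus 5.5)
The proof has two parts: first admissibility of $\rho$, then Condition \eqref{eq:secondMoment}. Admissibility should be direct. Each $\rho_i$ is a nonnegative Radon measure supported in $\overline{B(0,1)}$, and a finite nonnegative combination of such measures is again one, with support inside the union of the supports. The total mass is $\sum_i c_i\rho_i(\mathbb R^d)=\sum_i c_i=1$. For the finite-energy condition (iii), expand $\rho\otimes\rho=\sum_{i,j}c_ic_j\,\rho_i\otimes\rho_j$. It then suffices to show that each cross energy $\iint|\log|x-y||\,\rho_i(dx)\rho_j(dy)$ is finite. On $\overline{B(0,1)}^2$ we have $|x-y|\le 2$, so the positive part of $\log|x-y|$ is at most $\log 2$. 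It therefore remains to bound $\iint \log^+\frac{1}{|x-y|}\,\rho_i(dx)\rho_j(dy)$. The kernel $\log^+\frac1{|x-y|}$ is a positive definite (conditionally, up to constants) logarithmic-type kernel. I would handle the cross term with the Cauchy--Schwarz inequality for mutual logarithmic energy: for compactly supported probability measures of finite log energy, the mutual energy is bounded by $\sqrt{I(\rho_i)I(\rho_j)}$ plus a constant. This can be justified by positivity of $\iint\log\frac{1}{|x-y|}\,\mu(dx)\mu(dy)$ on zero-mass signed measures supported in a bounded set, after subtracting a constant. A more elementary alternative is to note that $\log\frac1{|x-y|}$ plus a constant ($\log 2$ suffices on the unit ball) is positive definite on $\overline{B(0,1)}$.

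For Condition \eqref{eq:secondMoment}, linearity of the pairing (Definition \ref{def:measureExtension} is linear in the measure, via linearity of the smooth approximations) gives
$$
h_\epsilon^\rho(z)-h_r^\rho(w)=\sum_{i=1}^n c_i\bigl(h_\epsilon^{\rho_i}(z)-h_r^{\rho_i}(w)\bigr).
$$
I would then use convexity of $x\mapsto x^2$ (Jensen, since $\sum c_i=1$), or equivalently Minkowski's inequality in $L^2(\mathbb P)$:
$$
\mathbb E\bigl[(h_\epsilon^\rho(z)-h_r^\rho(w))^2\bigr]\le \sum_i c_i\,\mathbb E\bigl[(h_\epsilon^{\rho_i}(z)-h_r^{\rho_i}(w))^2\bigr]\le \Bigl(\max_i C_i\Bigr)\left(\frac{|(z,\epsilon)-(w,r)|}{\epsilon\wedge r}\right)^p,
$$
valid on the same range $\frac12\le\epsilon/r\le2$. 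This settles Condition \eqref{eq:secondMoment} with constant $\max_i C_i$.

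The main obstacle is the cross-energy term in (iii). The step needing care is justifying the mutual-energy bound with the absolute value inside the integrand. My plan is to split $|\log|x-y||\le \log\frac1{|x-y|}+2\log 2$ on the ball, then bound $\iint\log\frac{2}{|x-y|}\,\rho_i\,\rho_j$ by $\frac12\bigl(I_2(\rho_i)+I_2(\rho_j)\bigr)$ using positive definiteness of the kernel $\log\frac{2}{|x-y|}$ applied to $\rho_i-\rho_j$. The inequality $\iint K\,(\rho_i-\rho_j)^{\otimes2}\ge0$ follows from Fourier positivity for zero-mass measures, with the constant term vanishing.
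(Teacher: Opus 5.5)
Your verification of Condition \eqref{eq:secondMoment} is the paper's argument: linearity of the pairing plus convexity of $t\mapsto t^2$. The paper records the constant $\sum_i c_iC_i$ and you record $\max_i C_i$; either works.

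The difference is in admissibility. The paper simply calls (i)--(iii) immediate. You notice that (iii) for $\rho$ involves the mixed energies $\iint|\log|x-y||\,\rho_i(dx)\rho_j(dy)$ with $i\neq j$, and that their finiteness does not follow formally from finiteness of the self-energies. Write $I(\mu):=\iint\log\frac{1}{|x-y|}\,\mu(dx)\mu(dy)$. You first reduce via $|\log|x-y||\le\log\frac{1}{|x-y|}+2\log 2$ on $\overline{B(0,1)}^2$. You then use nonnegativity of the logarithmic energy of the zero-mass measure $\rho_i-\rho_j$ to get $2\iint\log\frac{1}{|x-y|}\,\rho_i(dx)\rho_j(dy)\le I(\rho_i)+I(\rho_j)$. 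This is correct, and it supplies a step the paper leaves out. It uses exactly the positivity that makes Definition~\ref{def:LGF} a legitimate covariance.

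There are two things to tighten.

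\emph{Expanding the quadratic form.} You cannot expand $\iint\log\frac{1}{|x-y|}\,(\rho_i-\rho_j)(dx)(\rho_i-\rho_j)(dy)$ before you know the cross term is finite. Instead, prove the inequality for the smooth approximations $\rho_i*\psi_n$ and $\rho_j*\psi_n$ of Lemma~\ref{lem:measureApproximation}. There, positivity is just $\operatorname{Var}[(h,f)]\ge 0$ for the zero-mean test function $f=(\rho_i-\rho_j)*\psi_n$. Then pass to the limit:
\begin{enumerate}[(a)]
\item Take $\psi$ radial. Since $\log\frac{1}{|x|}$ is superharmonic on $\mathbb R^d$ for $d\ge 2$, this gives $I(\rho_i*\psi_n)\le I(\rho_i)$.
\item Fatou's lemma handles the cross term, because the mollified kernels converge pointwise to $\log\frac{1}{|x-y|}$ and are uniformly bounded below on bounded sets.
\end{enumerate}

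\emph{Side claims to drop.} Remove the assertions that $\log^+\frac{1}{|x-y|}$ is (conditionally) positive definite, and that $\log\frac{2}{|x-y|}$ is positive definite for arbitrary signed measures on the ball.
\begin{enumerate}[(a)]
\item The first is false for $d\ge 4$. The kink of $\log^+\frac{1}{|x|}$ on $|x|=1$ produces an oscillating Fourier tail of order $|\xi|^{-(d+3)/2}$. For $d\ge 4$ this dominates the positive $|\xi|^{-d}$ contribution of the singularity at the origin. For example, in $d=4$ the radial transform is proportional to $k^{-4}\bigl(2-2J_0(k)-kJ_1(k)\bigr)$, which changes sign.
\item The second is unverified in general dimension.
\end{enumerate}
Neither claim is used in your final argument, which needs positivity only on zero-mass measures, where the additive constant drops out.
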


\begin{proof}
The admissibility conditions (i), (ii), and (iii) of Definition \ref{def:mollifier} are immediate. Note that $h_\epsilon^\rho(z) = \sum_{i=1}^n c_i h_\epsilon^{\rho_i}(z)$,
so by convexity of $t \mapsto t^2$,
$$
\mathbb{E}\bigl[\bigl(h_\epsilon^\rho(z) - h_r^\rho(w)\bigr)^2\bigr]
\leq \sum_{i=1}^n c_i \mathbb{E}\bigl[\bigl(h_\epsilon^{\rho_i}(z) - h_r^{\rho_i}(w)\bigr)^2\bigr]
\leq \left( \sum_{i=1}^n c_i C_i \right) \left(\frac{|(z,\epsilon) - (w,r)|}{\epsilon \wedge r}\right)^p.
$$
Therefore $\rho$ satisfies Condition \eqref{eq:secondMoment}.
\end{proof}

\section{Joint Continuity of the Mollified Field}\label{sec:jointContinuity}

In this section we prove a general joint continuity lemma for Gaussian processes satisfying Condition \eqref{eq:secondMoment}. The key technical tool is a modified Kolmogorov-Chentsov theorem, which we state first. It appears as Lemma C.1 in \cite{HuMillerPeres2010}.

\begin{lem}\label{lem:modifiedKolmogorov}
Let $U \subset \mathbb{R}^d$ be a bounded open set and let $X : U \times (0,1] \to \mathbb{R}$ be a random field. Suppose there exists $\alpha,\beta >0$ such that
$$
\mathbb{E}\bigl[\bigl|X(z,\epsilon) - X(w,r)\bigr|^\alpha\bigr] \leq C \left( \frac{|(z,\epsilon) - (w,r)|}{\epsilon \wedge r} \right)^{d+1+\beta}
$$
for all $z,w\in U$ and all $\epsilon,r\in(0,1]$ with
$\frac{1}{2}\leq \epsilon/r\le 2$. Then, for each $\zeta > \alpha^{-1}$ and $\gamma \in (0, \beta/\alpha)$, there exists a random variable $M>0$, finite almost surely, such that $X$ has a modification $\widetilde{X}$ satisfying
$$
|\widetilde{X}(z,\epsilon) - \widetilde{X}(w,r)|
\leq M \left( \log \frac{1}{\epsilon} \right)^\zeta
\frac{|(z,\epsilon) - (w,r)|^\gamma}{\epsilon^{(d+\beta)/\alpha}},
$$
for all $z,w \in U$ and $\epsilon,r \in (0,1]$ with $1/2 \leq \epsilon/r \leq 2$. 
\end{lem}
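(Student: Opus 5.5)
We prove Lemma \ref{lem:modifiedKolmogorov} by a dyadic chaining argument carried out separately on each scale band, following the classical Kolmogorov--Chentsov proof. Afterwards we glue the bands together and track how the constants depend on $\epsilon$.

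\emph{Step 1: reduce to scale bands.} For $n\ge 0$, let $U_n := U\times[2^{-n-1},2^{-n}]$. Introduce rescaled coordinates $(z,\epsilon)\mapsto (z,\epsilon)/2^{-n}$, which map $U_n$ into a set of diameter $O(2^{n})$ in the $z$-directions and $O(1)$ in the $\epsilon$-direction. On $U_n$ we have $\epsilon\wedge r\asymp 2^{-n}$. The hypothesis then says that, in rescaled coordinates $s,t$, the field satisfies $\mathbb E|X(s)-X(t)|^\alpha\le C'|s-t|^{d+1+\beta}$, uniformly in $n$. The difficulty is that the rescaled domain is large: it is covered by about $2^{nd}$ unit cubes in $\mathbb R^{d+1}$.

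\emph{Step 2: chaining on each unit cube.} Fix one unit cube $Q$. Use the dyadic grids $D_k$ of mesh $2^{-k}$ in $Q$ and the events
\[
E_k=\{\exists\text{ neighbors } s,t\in D_k: |X(s)-X(t)|>2^{-k\gamma}\}.
\]
By Markov's inequality and a union bound over the roughly $2^{k(d+1)}$ neighbor pairs, $\mathbb P(E_k)\lesssim 2^{k(d+1)}2^{k\alpha\gamma}2^{-k(d+1+\beta)}=2^{-k(\beta-\alpha\gamma)}$, which is summable because $\gamma<\beta/\alpha$. The standard chaining argument then gives the following: the random variable $K_Q:=\sup_{s\neq t\in D\cap Q}|X(s)-X(t)|/|s-t|^\gamma$ satisfies the tail bound $\mathbb P(K_Q>\lambda)\le C\lambda^{-\alpha}$. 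To get this polynomial tail, we replace the threshold $2^{-k\gamma}$ by $\lambda 2^{-k\gamma}$ and sum the resulting geometric series. Neighboring cubes are handled with one extra step, or by using overlapping cubes.

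\emph{Step 3: union over cubes and bands (main obstacle).} This is where the constants must be controlled. Band $n$ contains about $2^{nd}$ cubes, so
\[
\mathbb P\Bigl(\max_{Q\subset U_n}K_Q>\lambda_n\Bigr)\lesssim 2^{nd}\lambda_n^{-\alpha}.
\]
Choose $\lambda_n=2^{n(d+\beta)/\alpha}\,n^{\zeta}$. Then the bound becomes $2^{-n\beta}n^{-\alpha\zeta}$. This is summable in $n$, and summability actually only needs $\alpha\zeta>1$, i.e.\ $\zeta>\alpha^{-1}$. That requirement also suggests using the sharper choice $\lambda_n=2^{nd/\alpha}n^{\zeta}$, which gives $\sum_n n^{-\alpha\zeta}<\infty$. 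Either choice is compatible with the stated bound $\epsilon^{-(d+\beta)/\alpha}$, since a larger power of $1/\epsilon$ only weakens the conclusion. By Borel--Cantelli there is an a.s.\ finite $M$ such that $\max_{Q\subset U_n}K_Q\le M n^\zeta 2^{n(d+\beta)/\alpha}$ for every $n$.

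\emph{Step 4: translate back and extend.} Return to the original coordinates. A rescaled distance $|s-t|^\gamma$ equals $2^{n\gamma}|(z,\epsilon)-(w,r)|^\gamma$, and the extra factor $2^{n\gamma}$ can be absorbed: $\gamma<\beta/\alpha$ leaves room, using the $\beta$-slack in the exponent when $\lambda_n$ is chosen with $d+\beta$. We also use $n\asymp\log(1/\epsilon)$. Pairs with $1/2\le\epsilon/r\le 2$ lie either in the same band or in adjacent bands, and the adjacent case is handled by the triangle inequality through an intermediate point. Finally, define $\widetilde X$ as the continuous extension of $X$ from the dense dyadic set. It is a modification of $X$ because, by the moment bound, $X$ is continuous in probability. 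The main bookkeeping risk is making the exponents come out exactly as stated, namely that the $2^{n\gamma}$ loss fits within the $(d+\beta)/\alpha$ power. If it does not, I would instead choose $\lambda_n$ with the extra factor $2^{n\gamma}$ built in, and check summability directly.
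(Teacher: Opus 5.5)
\paragraph{Main gap: far-apart pairs within a band.} The paper gives no proof of this lemma; it quotes it from Hu--Miller--Peres. Your scheme is the natural one: rescale each band $U\times[2^{-n-1},2^{-n}]$, apply Kolmogorov--Chentsov on the $\asymp 2^{nd}$ rescaled unit cubes, take a union bound, and finish with Borel--Cantelli. However, it only proves the estimate for \emph{nearby} pairs. The event $\max_Q K_Q\le\lambda_n$ bounds $|X(z,\epsilon)-X(w,r)|$ only when the two points lie in a common or adjacent rescaled cube, that is, when $|(z,\epsilon)-(w,r)|\lesssim 2^{-n}\asymp\epsilon$. The lemma claims the bound for all $z,w\in U$, including $|z-w|\gg\epsilon$, and none of Steps 2--4 treats that case. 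Your intermediate-point argument only bridges adjacent bands.

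The obvious fixes inside band $n$ fail. Chaining horizontally through $\asymp 2^n|z-w|$ unit steps costs $|z-w|\,2^{n(1+d/\alpha)}n^\zeta$. This fits under $n^\zeta|z-w|^\gamma 2^{n(d+\beta)/\alpha}$ only if $\beta\ge\alpha$, which is false in the application: Lemma~\ref{lem:jointContinuity} has $\beta/\alpha<p/2$. Moreover, $U$ need not be connected. Applying Markov directly to the far pair only gives the scale $2^{n(d+1+\beta)/\alpha}$.

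The missing idea is to chain \emph{through coarser scales}, which the parameter set $U\times(0,1]$ always allows above any point of $U$:
\begin{enumerate}[(a)]
\item Pick $m\le n$ with $2^{-m}\asymp|z-w|$, or $m=0$ if $|z-w|\ge 1$.
\item Connect $(z,\epsilon)$ to $(z,2^{-m})$ through the points $(z,2^{-k})$, $m\le k\le n$. Each step has rescaled length at most $1/2$ within a single band $k$, so it costs at most $\lambda_k$. Do the same above $w$.
\item Make one horizontal jump from $(z,2^{-m})$ to $(w,2^{-m})$. It has rescaled length $O(1)$ in band $m$. When $m=0$ it is bounded by an a.s.\ finite variable because $U$ is bounded.
\end{enumerate}
With $\lambda_k=2^{kd/\alpha}k^\zeta$ the total cost is $\lesssim n^\zeta 2^{nd/\alpha}$. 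Since $|(z,\epsilon)-(w,r)|\gtrsim 2^{-n}$ and $\gamma<\beta/\alpha$, this is $\lesssim n^\zeta 2^{n((d+\beta)/\alpha-\gamma)}\lesssim n^\zeta 2^{n(d+\beta)/\alpha}|(z,\epsilon)-(w,r)|^\gamma$.

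\paragraph{Choice of $\lambda_n$.} Your claim in Step 3 that ``either choice'' of $\lambda_n$ is compatible with the conclusion is false. Borel--Cantelli bounds the \emph{rescaled} H\"older constant, and undoing the rescaling multiplies it by $2^{n\gamma}$. So $\lambda_n=2^{n(d+\beta)/\alpha}n^\zeta$ ends at the exponent $(d+\beta)/\alpha+\gamma$, which is too large. What you need is $2^{n\gamma}\lambda_n\lesssim 2^{n(d+\beta)/\alpha}n^\zeta$ together with $\sum_n 2^{nd}\lambda_n^{-\alpha}<\infty$. Both $\lambda_n=2^{nd/\alpha}n^\zeta$ (where $\zeta>\alpha^{-1}$ is used) and your fallback $\lambda_n=2^{n((d+\beta)/\alpha-\gamma)}n^\zeta$ satisfy this, so commit to one instead of hedging.

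\paragraph{The top band.} $n^\zeta=0$ at $n=0$, so ``for every $n$'' fails for the top band. This reflects a defect in the statement itself. As $\epsilon\to1$ the right-hand side tends to $0$, and at $\epsilon=r=1$ it would force $\widetilde X(\cdot,1)$ to be constant on $U$. What can actually be proved uses $\log(2/\epsilon)$ in place of $\log(1/\epsilon)$, which in your bookkeeping means $(n+1)^\zeta$ instead of $n^\zeta$.
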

We will apply Lemma \ref{lem:modifiedKolmogorov} to the mollified fields satisfying Condition \eqref{eq:secondMoment}. 

\begin{lem}\label{lem:jointContinuity}
Let $p \in (0,1]$, let $U \subset \mathbb{R}^d$ be a bounded open set, and suppose $X(z,\epsilon)$ is a centered Gaussian process on $U \times (0,1]$ satisfying the second-moment bound
$$
\mathbb{E}\bigl[\bigl(X(z,\epsilon) - X(w,r)\bigr)^2\bigr] \leq C \left(\frac{|(z,\epsilon) - (w,r)|}{\epsilon \wedge r}\right)^p
$$
for all $z,w\in U$ and all $\epsilon,r\in(0,1]$ with
$\frac{1}{2}\le \epsilon/r\leq 2$. Then $X$ has a modification $\widetilde{X}$ such that for every $0 < \gamma < p/2$ and every $\zeta> 0$, there exists a random variable $M = M(p,\gamma, \zeta)$, finite almost surely, with the property that
\begin{equation}\label{eq:jointContinuityBound}
|\widetilde{X}(z,\epsilon) - \widetilde{X}(w,r)|
\leq M \left( \log \frac{1}{\epsilon} \right)^\zeta
\frac{|(z,\epsilon) - (w,r)|^\gamma}{\epsilon^{\frac{p}{2}}}
\end{equation}
for all $z,w \in U$ and $\epsilon,r \in (0,1]$ satisfying
$\frac{1}{2} \leq \frac{\epsilon}{r} \leq 2$.
In particular, $\widetilde{X}$ is jointly continuous in $(z,\epsilon)$ on $U \times (0,1]$.
\end{lem}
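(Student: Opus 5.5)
The plan is to reduce Lemma~\ref{lem:jointContinuity} to Lemma~\ref{lem:modifiedKolmogorov} by upgrading the second-moment bound to a high-moment bound using Gaussianity. Since $X(z,\epsilon)-X(w,r)$ is a centered Gaussian variable, for every $\alpha>0$ we have
$$
\mathbb E\bigl[|X(z,\epsilon)-X(w,r)|^\alpha\bigr]=c_\alpha\,\mathbb E\bigl[(X(z,\epsilon)-X(w,r))^2\bigr]^{\alpha/2}\le c_\alpha C^{\alpha/2}\left(\frac{|(z,\epsilon)-(w,r)|}{\epsilon\wedge r}\right)^{p\alpha/2},
$$
where $c_\alpha=\mathbb E|N(0,1)|^\alpha$. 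Given the target $\gamma<p/2$, I would choose $\alpha$ large and set $\beta:=p\alpha/2-(d+1)$. This requires $\alpha>2(d+1)/p$ so that $\beta>0$. The hypothesis of Lemma~\ref{lem:modifiedKolmogorov} then holds with exponent $d+1+\beta=p\alpha/2$.

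Applying Lemma~\ref{lem:modifiedKolmogorov} gives, for any $\zeta>\alpha^{-1}$ and $\gamma'<\beta/\alpha=p/2-(d+1)/\alpha$, a modification satisfying
$$
|\widetilde X(z,\epsilon)-\widetilde X(w,r)|\le M(\log\tfrac1\epsilon)^\zeta\frac{|(z,\epsilon)-(w,r)|^{\gamma'}}{\epsilon^{(d+\beta)/\alpha}},\qquad \frac{d+\beta}{\alpha}=\frac p2-\frac1\alpha<\frac p2.
$$
Since $\epsilon\le1$, we have $\epsilon^{-(p/2-1/\alpha)}\le\epsilon^{-p/2}$, which yields \eqref{eq:jointContinuityBound}. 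For given $\gamma<p/2$ and $\zeta>0$, I would take $\alpha$ large enough that both $\gamma<p/2-(d+1)/\alpha$ and $\zeta>1/\alpha$.

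The point needing care, and the main obstacle, is that the modification should not depend on $(\gamma,\zeta)$. Each application of Lemma~\ref{lem:modifiedKolmogorov} produces a modification that is continuous on $U\times(0,1]$; continuity follows from the bound, chaining locally through pairs with $\epsilon/r\in[1/2,2]$. Two continuous modifications agree on a countable dense set almost surely, hence everywhere. So I would fix one modification $\widetilde X$ from a sequence $\alpha_n\to\infty$. The bound for arbitrary $(\gamma,\zeta)$ then follows from the bound for some $n$ with $\gamma<p/2-(d+1)/\alpha_n$ and $\zeta>1/\alpha_n$, valid simultaneously almost surely after a countable intersection. This works because a larger H\"older exponent only helps: differences with $|(z,\epsilon)-(w,r)|$ bounded by $\operatorname{diam}(U)+1$ allow reducing the exponent $\gamma_n$ to $\gamma$ at the cost of a constant absorbed into $M$.

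Finally, joint continuity follows directly from \eqref{eq:jointContinuityBound}. On any compact subset, $\epsilon$ is bounded below, and locally the constraint $1/2\le\epsilon/r\le2$ is automatic for nearby points.
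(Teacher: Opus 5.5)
Your proposal is correct and follows the paper's proof. You upgrade the second-moment bound to an $\alpha$-th moment bound by Gaussianity, set $\beta=p\alpha/2-(d+1)$ with $\alpha$ large enough that $\zeta>1/\alpha$ and $\gamma<p/2-(d+1)/\alpha$, apply Lemma~\ref{lem:modifiedKolmogorov}, and use $(d+\beta)/\alpha=p/2-1/\alpha<p/2$ together with $\epsilon\le 1$.

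Your extra step makes the modification independent of $(\gamma,\zeta)$ via uniqueness of continuous modifications, a quantifier point the paper passes over. For that step the cleanest route is the one implicit in your condition on $n$: apply Lemma~\ref{lem:modifiedKolmogorov} directly for each fixed $(\gamma,\zeta)$ and identify the resulting continuous modification with $\widetilde X$. This avoids any monotonicity argument in $\zeta$, which would fail where $\log(1/\epsilon)<1$.
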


\begin{proof}
We use a similar method to that of Proposition 2.1 in \cite{HuMillerPeres2010}. In particular, it suffices to show that $X$ satisfies the hypothesis of Lemma \ref{lem:modifiedKolmogorov} for $\alpha,\beta$ arbitrarily large with $\beta/\alpha$ arbitrarily close to $p/2$. 

Let $\gamma\in(0,p/2)$ and $\zeta>0$ be given. Choose $\alpha$ large enough so that $\zeta > \frac{1}{\alpha}$, $\gamma < \frac{p}{2}- \frac{d+1}{\alpha}$. Let $\beta = \frac{p\alpha}{2}- (d+1)$. Then because the process is Gaussian, we have
$$
\mathbb{E}\bigl[ \bigl|X(z,\epsilon)-X(w,r)\bigr|^{\alpha} \bigr]\leq C_\alpha \Bigl(\frac{|(z,\epsilon)-(w,r)|}{\epsilon\wedge r}\Bigr)^{p\alpha/2}
= C_\alpha \Bigl(\frac{|(z, \epsilon)-(w,r)|}{ \epsilon\wedge r}\Bigr)^{d+1+\beta}.
$$
Therefore, the hypothesis of Lemma \ref{lem:modifiedKolmogorov} is satisfied. This yields a modification $\widetilde{X}$ and a random variable $M>0$, finite almost surely, such that
$$
|\widetilde{X}(z,\epsilon)-\widetilde{X}(w,r)|
\leq M\Bigl(\log\frac{1}{\epsilon}\Bigr)^\zeta
\frac{|(z,\epsilon)-(w,r)|^{\gamma}}{\epsilon^{\frac{d+\beta}{\alpha}}}$$
for all $z,w\in U$ and $\epsilon,r\in(0,1]$ with $\frac{1}{2}\leq \epsilon/r\leq 2$. Note that $\frac{d+\beta}{\alpha} = \frac{p}{2} - \frac{1}{\alpha} < \frac{p}{2}$. This proves \eqref{eq:jointContinuityBound}.
\end{proof}

\section{The Main Theorem for the LGF on $\mathbb{R}^d$}\label{sec:mainTheorem}

In this section we prove the main result of the paper. The thick point set of the log-correlated Gaussian field on $\mathbb{R}^d$ is independent of the choice of admissible mollifier. Throughout this section, we fix two admissible mollifiers $\rho$ and $\sigma$ and assume they both satisfy Condition \eqref{eq:secondMoment} for some fixed $p \in (0,1]$. 

\subsection{Preliminary Results}
We begin by defining the difference field which will be used to show the equivalence of the thick point sets.

\begin{defn}\label{def:Xepsilon}
For $\epsilon > 0$ and $z \in \mathbb{R}^d$, define
$$
X(z,\epsilon) := h_\epsilon^\rho(z) - h_\epsilon^\sigma(z),
$$
where $h_\epsilon^\rho$ and $h_\epsilon^\sigma$ are defined as in \eqref{eq:mollifiedField}.
\end{defn}

\begin{rem}
Throughout this section, we work with the LGF $h$ on $\mathbb{R}^d$ as a random distribution with global additive constant chosen so that the average of $h$ over the unit sphere is $0$. However, all quantities we consider are invariant of the chosen additive constant. The thick point sets are unchanged because the constant is divided by $\log(1/\epsilon)$, which tends to $0$, and the difference field $X(z,\epsilon) = h_\epsilon^\rho(z) - h_\epsilon^\sigma(z)$ cancels the constant because both mollifiers have total mass $1$. 
\end{rem}

We now state a technical result that will allow us to prove uniform convergence of $X(z,\epsilon)$.

\begin{lem}\label{lem:Xscaling}
For every $z \in \mathbb{R}^d$ and $\epsilon > 0$,
$$
\{X(z+\epsilon x,\epsilon)\}_{x \in B(0,1)}
\stackrel{d}{=}
\{X(x,1)\}_{x \in B(0,1)}.
$$
\end{lem}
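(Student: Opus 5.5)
Both sides are centered Gaussian processes indexed by $x\in B(0,1)$, so it is enough to show that their covariance functions coincide. The covariance is well defined without any normalization, because $X(z,\epsilon)=(h,\rho_{z,\epsilon}-\sigma_{z,\epsilon})$ pairs $h$ against a signed measure of total mass zero. By the remark preceding the lemma, $X$ does not depend on the additive constant, so we may work with $h$ modulo constants throughout. I would proceed in the following order.

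\textbf{Step 1: covariance formula.} Write $\mu_{z,\epsilon}:=\rho_{z,\epsilon}-\sigma_{z,\epsilon}$. By the $L^2(\mathbb P)$ extension of Definition~\ref{def:measureExtension}, the covariance formula of Definition~\ref{def:LGF} extends from mean-zero smooth test functions to mean-zero signed measures of finite logarithmic energy. One checks this by passing to the limit along the smoothings $\rho_n,\sigma_n$ of Lemma~\ref{lem:measureApproximation}. This gives
$$
\operatorname{Cov}\bigl[X(z+\epsilon x,\epsilon),X(z+\epsilon y,\epsilon)\bigr]
=\iint\log\frac1{|u-v|}\,\mu_{z+\epsilon x,\epsilon}(du)\,\mu_{z+\epsilon y,\epsilon}(dv).
$$
The cross terms are finite because of the finite energy condition together with Cauchy--Schwarz for the positive semidefinite kernel on mean-zero measures. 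Alternatively, finiteness is automatic, since each term is a covariance of $L^2$ variables.

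\textbf{Step 2: change of variables.} Apply the substitution $u=z+\epsilon u'$, $v=z+\epsilon v'$. By the definition of the pushforward, $\mu_{z+\epsilon x,\epsilon}$ becomes $\mu_{x,1}=\rho_{x,1}-\sigma_{x,1}$. The kernel becomes $\log\frac1{|u'-v'|}-\log\epsilon$. The $-\log\epsilon$ term integrates to zero because each $\mu$ has total mass zero, exactly as in the proof of Lemma~\ref{lem:scalingInvariancePrelim} and of Theorem~\ref{thm:sufficient}. Hence the covariance equals $\operatorname{Cov}[X(x,1),X(y,1)]$.

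\textbf{Step 3: conclusion and main difficulty.} Since centered Gaussian processes are determined in finite-dimensional law by their covariance, Step 2 gives the equality in law. If one also wants equality in law of the continuous modifications as random continuous functions, this follows because finite-dimensional distributions determine the law on $C(B(0,1))$. Lemma~\ref{lem:jointContinuity} provides continuity on both sides, and the left side is continuous because it is a reparametrization of the modification of $X$.

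The only step that needs real care is Step 1. There one must justify that the covariance formula holds for measures, and not only for Schwartz functions. I would handle this via the $L^2$ convergence $(h,\rho_n)\to(h,\rho)$ together with convergence of the logarithmic energies of $\rho_n-\sigma_n$. That energy convergence holds because mollifying by $\psi_n$ preserves the finite-energy bounds, which allows dominated convergence of the double integrals.
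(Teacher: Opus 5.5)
Your proof is correct and is essentially the paper's argument. The paper invokes the scale/translation invariance of Lemma~\ref{lem:scalingInvariancePrelim}, which is proved by exactly your change of variables with the $\log$ term killed by zero total mass, and then notes that $X$ does not see the additive constant; you instead perform that covariance computation directly on the mass-zero measures $\rho_{x,1}-\sigma_{x,1}$, which has the small advantage of not needing to transfer a distributional identity stated for Schwartz test functions to the $L^2$-extended pairings with measures.
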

\begin{proof}
Let $z\in\mathbb{R}^d$ and $\epsilon>0$. For $x\in B(0,1)$,
$$
X(z+\epsilon x,\epsilon) 
= \int_{\mathbb{R}^d} h\bigl(z+\epsilon(x+y)\bigr) (\rho-\sigma)(dy).
$$
By Lemma \ref{lem:scalingInvariancePrelim} the field $\{h(z+\epsilon x)\}_{x \in B(0,1)}$ has the same law as $\{h(x)\}_{x \in B(0,1)}$ modulo additive constants. But, note that the value of $X(z,\epsilon)$ is independent of the choice of additive constant representing $h$. Therefore, this implies
$$
\left\{\int_{\mathbb{R}^d} h\bigl(z+\epsilon(x+y)\bigr) (\rho-\sigma)(dy)\right\}_{x \in B(0,1)}
\stackrel{d}{=}
\left\{\int_{\mathbb{R}^d} h(x+y) (\rho-\sigma)(dy)\right\}_{x\in B(0,1)}
$$
The left-hand side is $\{X(z+\epsilon x,\epsilon)\}_{x\in B(0,1)}$, and the right-hand side is $\{X(x,1)\}_{x\in B(0,1)}$,
so this proves the lemma.
\end{proof}
We will use the following standard inequality for centered Gaussian processes, called the Borell-TIS inequality.

\begin{lem}[{\cite[Theorem 2.1.1]{AdlerTaylor}}]\label{lem:BorellTIS}
Let $\{Y_t\}_{t \in T}$ be a centered Gaussian Process, almost surely bounded on T. Then, 
$$
\mathbb{E}\Bigl[\sup_{t\in T}Y_t\Bigr]<\infty,
\qquad
\sigma_T^2:=\sup_{t\in T}\mathbb{E}\bigl[Y_t^2\bigr]<\infty,
$$
and for all $u > 0$, 
$$\mathbb{P}(\sup_{t\in T} |Y_t|> \mathbb{E}[\sup_{t\in T}Y_t] + u) \leq 2\exp(-\frac{u^2}{2\sigma_T^2}).$$
\end{lem}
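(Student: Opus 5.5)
Since the statement is quoted from \cite[Theorem 2.1.1]{AdlerTaylor}, I would only sketch the standard argument: reduce to finitely many indices, then apply sharp Gaussian concentration for Lipschitz functions.

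\emph{Reduction to finite $T$ and finiteness of $\sigma_T$.} I assume, as is implicit in \cite{AdlerTaylor}, that the process is separable. Then $\sup_{t\in T}Y_t=\lim_n \max_{t\in T_n}Y_t$ for an increasing sequence of finite sets $T_n$, and likewise for $|Y_t|$. By monotone convergence it suffices to prove all three claims for finite $T$, with constants that do not depend on $n$.

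First, $\sigma_T<\infty$. Since $\sup_T|Y_t|<\infty$ a.s., there is $m<\infty$ with $\mathbb P(\sup_T|Y_t|\le m)\ge 3/4$. Hence $\mathbb P(|Y_t|\le m)\ge 3/4$ for each $t$. As $Y_t\sim N(0,\sigma_t^2)$, this forces $\sigma_t\le m/c$ for a universal constant $c>0$, so $\sigma_T<\infty$.

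\emph{Concentration for finite $T=\{t_1,\dots,t_n\}$.} Write $Y=Ag$, where $g$ is standard Gaussian in $\mathbb R^k$. Set $f(x)=\max_i (Ax)_i$. Then $f$ is Lipschitz with constant $\max_i|A_i|=\max_i \sigma_{t_i}\le\sigma_T$, where $A_i$ is the $i$th row of $A$.

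The key input is the sharp Tsirelson--Ibragimov--Sudakov bound
$$\mathbb P\bigl(f(g)-\mathbb E f(g)>u\bigr)\le e^{-u^2/(2L^2)}$$
for $L$-Lipschitz $f$. I would prove it with the Brownian martingale method. Realize $g=B_1$ for a Brownian motion $B$ in $\mathbb R^k$, and let $M_s=\mathbb E[f(B_1)\mid\mathcal F_s]=P_{1-s}f(B_s)$, where $P_t$ is the heat semigroup. Then
$$dM_s=\nabla P_{1-s}f(B_s)\cdot dB_s.$$
Heat smoothing preserves the Lipschitz constant, so $|\nabla P_{1-s}f|\le L$. Consequently $\exp\bigl(\lambda M_s-\lambda^2L^2 s/2\bigr)$ is a supermartingale, which gives
$$\mathbb E\, e^{\lambda(f(g)-\mathbb Ef(g))}\le e^{\lambda^2L^2/2}.$$
Chebyshev's inequality with $\lambda=u/L^2$ yields the tail bound. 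This sharp-constant step is the only delicate point; easier interpolation arguments, such as Maurey--Pisier, lose the constant in the exponent.

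\emph{Finiteness of $\mathbb E\sup$ and the stated inequality.} For finite $T_n$, concentration places $\max_{T_n}Y_t$ within $O(\sigma_T)$ of its mean. Its median is at most $m$ uniformly in $n$, so $\mathbb E\max_{T_n}Y_t\le m+C\sigma_T$. Letting $n\to\infty$ gives $\mathbb E\sup_T Y_t<\infty$.

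For the two-sided bound, note that $\{\sup_T|Y_t|>a\}\subseteq\{\sup_T Y_t>a\}\cup\{\sup_T(-Y_t)>a\}$. Since $-Y\overset{d}{=}Y$, we have $\mathbb E\sup_T(-Y_t)=\mathbb E\sup_T Y_t$. Applying the one-sided bound to $Y$ and to $-Y$ with $a=\mathbb E\sup_T Y_t+u$ and adding gives the factor $2$. Finally, pass to the limit over $T_n$.
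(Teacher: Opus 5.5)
Your sketch is correct. The paper gives no proof of its own; it simply quotes Adler--Taylor, so the relevant comparison is with the cited textbook argument, and your skeleton matches it:
\begin{itemize}
\item separability to reduce to finite $T_n\uparrow$;
\item the map $x\mapsto\max_i(Ax)_i$, with Lipschitz constant $\max_i|A_i|\le\sigma_T$;
\item the median argument giving $\mathbb{E}\max_{T_n}Y_t\le m+C\sigma_T$ uniformly in $n$, followed by monotone convergence;
\item the union bound over $Y$ and $-Y$ (with $-Y\overset{d}{=}Y$), which is exactly how the paper's absolute-value form with the factor $2$ follows from the one-sided inequality.
\end{itemize}

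The genuine difference is how you obtain the sharp bound $\mathbb{E}e^{\lambda(f(g)-\mathbb{E}f(g))}\le e^{\lambda^2L^2/2}$. You use the Brownian martingale $P_{1-s}f(B_s)$ and an exponential supermartingale, which is quick but relies on It\^o calculus. Adler--Taylor instead use the Gaussian covariance identity $\operatorname{Cov}(F(X),G(X))=\int_0^1\mathbb{E}\langle\nabla F(X),\nabla G(\alpha X+\sqrt{1-\alpha^2}\,X')\rangle\,d\alpha$, where $X'$ is an independent copy. Applied to $G=e^{\lambda F}$ for centered smooth Lipschitz $F$, it gives $(\log\mathbb{E}e^{\lambda F})'\le\lambda L^2$ using only finite-dimensional Gaussian calculus. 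The cost is a smoothing step for the non-differentiable $\max$. That route also yields the sharp constant $\tfrac12$, so your remark that interpolation loses the constant is true of Maurey--Pisier but not of this interpolation.

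Two small points are worth making explicit. First, the median comparison needs the lower tail, which you get by applying the bound to $-f$. Second, in the final limit you should use $\mathbb{E}\max_{T_n}Y_t\le\mathbb{E}\sup_TY_t$. Then $\{\max_{T_n}|Y_t|>\mathbb{E}\sup_TY_t+u\}$ is contained in the corresponding finite-$n$ event, and you can let $n\to\infty$ by continuity from below.
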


\begin{proof}[Proof of Proposition~\ref{prop:uniformDiffGoal}]
Fix a bounded open set $U\subset\mathbb R^d$. Enlarging $U$ if necessary, we may assume that $\overline{B(0,1)}\subset U$. By Condition \eqref{eq:secondMoment} and Lemma \ref{lem:jointContinuity}, the mollified fields
$h^\rho_\epsilon(z)$ and $h^\sigma_\epsilon(z)$ admit jointly continuous modifications. Throughout the rest of this proof, we abuse notation and denote these modifications again by $h^\rho_\epsilon(z)$ and $h^\sigma_\epsilon(z)$. We define
$$
X(z,\epsilon):=h^\rho_\epsilon(z)-h^\sigma_\epsilon(z),
$$
so that $X$ is the corresponding jointly continuous modification of the difference field from Definition \ref{def:Xepsilon}. By Lemma \ref{lem:jointContinuity}, for every $\zeta>0$ and every $0<\gamma<p/2$, there exist almost surely finite random variables
$M_\rho$ and $M_\sigma$ such that
$$
\bigl|h^\nu_\epsilon(z)-h^\nu_r(w)\bigr|
\leq M_\nu\left(\log\frac1\epsilon\right)^\zeta
\frac{|(z,\epsilon)-(w,r)|^\gamma}{\epsilon^{p/2}}
$$
for $\nu=\rho,\sigma$, all $z,w\in U$, and all
$\epsilon,r\in(0,1]$ with $\frac{1}{2}\leq \epsilon/r\leq 2$. By the triangle inequality, $X$ satisfies the same estimate with $M:=M_\rho+M_\sigma$. In particular, $X$ is jointly continuous on
$U\times(0,1]$, so $\sup_{x\in B(0,1)}|X(x,1)|<\infty$
almost surely. This allows us to apply Lemma \ref{lem:BorellTIS}. Set
$$
m=\mathbb E\Bigl[\sup_{x\in B(0,1)}X(x,1)\Bigr]
\qquad\text{and}\qquad
\sigma^2=\sup_{x\in B(0,1)}
\mathbb E\bigl[|X(x,1)|^2\bigr].
$$
Then $m<\infty$ and $\sigma^2<\infty$, and for every $u>0$,
$$
\mathbb P\Bigl(\sup_{x\in B(0,1)}|X(x,1)|>m+u
\Bigr)\leq2\exp\Bigl(-\frac{u^2}{2\sigma^2}\Bigr).
$$
By Lemma \ref{lem:Xscaling}, for every $z\in\mathbb R^d$,
$\epsilon>0$, and $u>0$,
$$\mathbb P\Bigl(\sup_{x\in B(z,\epsilon)}|X(x,\epsilon)|>m+u\Bigr)
\leq2\exp\Bigl(-\frac{u^2}{2\sigma^2}\Bigr).
$$
Cover $U$ by balls $B(z_j,\epsilon)$, $j=1,\dots,N_\epsilon$, with $N_\epsilon\leq C_U\epsilon^{-d}$. Then
$$
\mathbb P\Bigl( \sup_{x\in U}|X(x,\epsilon)|>m+u \Bigr)
\leq 2C_U\epsilon^{-d} \exp\Bigl(-\frac{u^2}{2\sigma^2}\Bigr).
$$
Fix $\delta>0$. Let $u=\frac{\delta}{2}\log(1/\epsilon)$.
For sufficiently small $\epsilon$, $m+u<\delta\log(1/\epsilon)$. This gives
$$
\mathbb P\Bigl(\sup_{x\in U}|X(x,\epsilon)|>\delta\log(1/\epsilon)
\Bigr)\leq2C_U\epsilon^{-d}\exp\Bigl(-\frac{\delta^2(\log(1/\epsilon))^2}{8\sigma^2}\Bigr).
$$
Now take $\epsilon_n=1/n$. Then
$$
\mathbb P\Bigl(\sup_{x\in U}|X(x,\epsilon_n)|>\delta\log(1/\epsilon_n)\Bigr)\leq
2C_U n^d\exp\Bigl(-\frac{\delta^2(\log n)^2}{8\sigma^2}\Bigr).
$$
The right-hand side is summable, so Borel-Cantelli gives that almost surely, for all sufficiently large $n$,
$$
\frac{\sup_{x\in U}|X(x,\epsilon_n)|}{\log(1/\epsilon_n)}\leq\delta.
$$
Since $\delta>0$ was arbitrary,
$$
\frac{\sup_{x\in U}|X(x,\epsilon_n)|}{\log(1/\epsilon_n)}\longrightarrow 0\qquad\text{almost surely as }n\to\infty.
$$

It remains to upgrade this from the sequence
$\epsilon_n=1/n$ to arbitrary $\epsilon\to0$.
Let $\epsilon\in[\epsilon_{n+1},\epsilon_n]$.
Then $|\epsilon-\epsilon_n| \leq \epsilon_n-\epsilon_{n+1}\leq n^{-2}$ and $\frac{1}{2}
\leq \frac{\epsilon}{\epsilon_n} \leq 1$. Using the estimate for $X$ obtained above with $\zeta=1$ and $\gamma\in(\frac p4,\frac p2)$, there is an almost surely finite random variable $M$ such that
$$
\bigl|X(w,\epsilon)-X(w,\epsilon_n)\bigr| \leq M\left(\log\frac1{\epsilon_n}\right) \frac{|\epsilon-\epsilon_n|^\gamma}{\epsilon_n^{p/2}}
$$
for every $w\in U$. Using $|\epsilon-\epsilon_n|\leq n^{-2}$ and $\epsilon_n^{-p/2}=n^{p/2}$, we obtain
$$
\sup_{\epsilon\in[\epsilon_{n+1},\epsilon_n]}
\sup_{w\in U} \bigl|X(w,\epsilon)-X(w,\epsilon_n)\bigr|
\leq M(\log n)n^{\frac p2-2\gamma}.
$$
Since $\frac {p}{2}-2\gamma<0$,
$$
(\log n)n^{\frac p2-2\gamma}=o(\log n)
=o\left(\log\frac1{\epsilon_n}\right).
$$
Therefore,
$$
\sup_{\epsilon\in[\epsilon_{n+1},\epsilon_n]}\frac{\sup_{w\in U}\bigl|X(w,\epsilon)-X(w,\epsilon_n)\bigr|
}{\log(1/\epsilon_n)}\longrightarrow 0\qquad\text{as }n\to\infty.
$$
For $\epsilon\in[\epsilon_{n+1},\epsilon_n]$, the triangle inequality and taking the supremum over $\epsilon\in[\epsilon_{n+1},\epsilon_n]$ gives
$$
\sup_{\epsilon\in[\epsilon_{n+1},\epsilon_n]}
\sup_{w\in U}|X(w,\epsilon)|
\le
\sup_{\epsilon\in[\epsilon_{n+1},\epsilon_n]}
\sup_{w\in U}
\bigl|X(w,\epsilon)-X(w,\epsilon_n)\bigr|
+
\sup_{w\in U}|X(w,\epsilon_n)|.
$$
Since $\log(1/\epsilon)\ge \log(1/\epsilon_n)$ for
$\epsilon \in [\epsilon_{n+1},\epsilon_n]$, dividing by
$\log(1/\epsilon)$ and then replacing the denominator on
the right by the smaller value $\log(1/\epsilon_n)$ gives
$$\sup_{\epsilon\in[\epsilon_{n+1},\epsilon_n]}
\frac{\sup_{w\in U}|X(w,\epsilon)|}{\log(1/\epsilon)}
\leq
\sup_{\epsilon\in[\epsilon_{n+1},\epsilon_n]}\frac{
\sup_{w\in U}
\bigl|X(w,\epsilon)-X(w,\epsilon_n)\bigr|}{\log(1/\epsilon_n)}+\frac{\sup_{w\in U}|X(w,\epsilon_n)|}{\log(1/\epsilon_n)}.$$
Since the two terms on the right side tend to $0$ almost surely by the convergence statements above, 
$$
\sup_{\epsilon\in[\epsilon_{n+1},\epsilon_n]}
\frac{\sup_{w\in U}|X(w,\epsilon)|}{\log(1/\epsilon)}
\longrightarrow 0
\qquad\text{almost surely as }n\to\infty.
$$
Since the intervals $[\epsilon_{n+1},\epsilon_n]$ cover
$(0,1]$ as $n$ varies, this proves the proposition.
\end{proof}

\subsection{The Main Theorem}

We now prove Theorem~\ref{thm:main}.

\begin{proof}[Proof of Theorem~\ref{thm:main}]
By Proposition \ref{prop:uniformDiffGoal}, applied with
$U_n=B(0,n)$, we have that for every $n\geq 1$,
$$
\sup_{z\in U_n}\frac{|X(z,\epsilon)|}{\log(1/\epsilon)}
\to 0
\qquad\text{almost surely, as }\epsilon \to 0.
$$
Since there are only countably many $n$, these
convergences hold simultaneously almost surely. Now fix $z\in\mathbb R^d$ and choose $n$ such that
$z\in U_n$. Since
$$
\frac{h_\epsilon^\rho(z)}{\log(1/\epsilon)} =\frac{h_\epsilon^\sigma(z)}{\log(1/\epsilon)}
+\frac{X(z,\epsilon)}{\log(1/\epsilon)},
$$
and the final term tends to $0$ uniformly on $U_n$,
the $\liminf$, $\limsup$, and, when it exists, the limit of the two normalized fields coincide at $z$. Hence, almost surely,
$$
\mathcal T_{\alpha,\star}^\rho
=
\mathcal T_{\alpha,\star}^\sigma
\qquad
\text{for every }
\alpha\in\mathbb R
\text{ and every }
\star\in\{\liminf,\limsup,\lim\}.
$$
\end{proof}

\section{The 2D Zero-Boundary GFF on a Domain}\label{sec:zeroBoundary}

In this section we extend Theorem \ref{thm:main} to the zero-boundary Gaussian free field on an open domain with harmonically non-trivial boundary in two dimensions. The argument exploits the Markov property to decompose the normalized whole-plane GFF on the domain into a zero-boundary part (up to an additive constant) and an independent harmonic part.

Let $D\subset\mathbb R^2$ be an open domain with harmonically non-trivial boundary. Let $h$ denote the whole-plane GFF with additive constant fixed so that the average of $h$ over the unit circle is $0$. Let $h^D$ be a zero-boundary GFF on $D$. Let $\rho,\sigma$ be admissible mollifiers. For $\nu\in\{\rho,\sigma\}$ and $z\in D$, choose $0<\epsilon<\operatorname{dist}(z,\partial D)$. Since $\operatorname{supp}\nu\subseteq\overline{B(0,1)}$, the measure $\nu_{z,\epsilon}$ is supported inside $D$. We define
$$
h^{\nu}_\epsilon(z)=(h,\nu_{z,\epsilon}),\qquad
h^{D,\nu}_\epsilon(z)=(h^D,\nu_{z,\epsilon})
\qquad(\nu=\rho,\sigma).
$$
With the same abuse of notation as in Section \ref{sec:mollification}, we also write
$$
h^{\nu}_\epsilon(z)=\int h(z+\epsilon y)\,\nu(dy),\qquad
h^{D,\nu}_\epsilon(z)=\int h^D(z+\epsilon y)\,\nu(dy).
$$
Define the difference fields
$$
X(z,\epsilon):=h^{\rho}_\epsilon(z)-h^{\sigma}_\epsilon(z),
\qquad
X^{D}(z,\epsilon):=h^{D,\rho}_\epsilon(z)-h^{D,\sigma}_\epsilon(z).
$$
For $\nu\in\{\rho,\sigma\}$ and $\star\in\{\liminf,\limsup,\lim\}$, define
$\mathcal T_{\alpha,\star}^{D,\nu}$ by the corresponding condition in Definition \ref{def:thickPoints}, with $h^D$ in place of $h$. Applying the Markov property to $h$ (Lemma \ref{lem:markov}) with $U=D$, we obtain the decomposition
\begin{equation}\label{eq:markovDecomp}
h = \mathring h + \phi
\end{equation}
where $\phi$ is a random harmonic function on $D$, determined by the values of $h$ on $\mathbb C\setminus D$, $\mathring h$ is independent of $\phi$, and $\mathring h$ has the law of a zero‑boundary GFF on $D$ minus its average over $\partial\mathbb D\cap D$. 

Let $L$ be the average of $h^D$ over $\partial\mathbb D\cap D$, and $L = 0$ when $\partial\mathbb D\cap D = \emptyset$. Then, we have $\mathring h \stackrel{d}{=} h^D -L$. For any admissible mollifiers $\rho,\sigma$, we have, as processes in $(z,\epsilon)$,
\begin{equation}\label{hcircequivalence}
\mathring{h} * \rho_\epsilon(z) - \mathring{h}*\sigma_\epsilon(z) \overset{d}{=}\int (h^D -L)(z+\epsilon y)(\rho-\sigma)(dy) = X^{D}(z,\epsilon)
\end{equation}
Substituting the Markov decomposition into the difference field, we get
\begin{equation}\label{XinftyMarkov}
X(z,\epsilon) = (\mathring{h} * \rho_\epsilon(z) - \mathring{h}*\sigma_\epsilon(z)) + (\phi*\rho_\epsilon(z) - \phi*\sigma_\epsilon(z)).
\end{equation}
We now show that the harmonic correction term is negligible compared to $\log(1/\epsilon)$.

\begin{lem}\label{lem:harmonicSmall}
Let $K\subset D$ be a closed ball contained in $D$.
If $\rho,\sigma$ are admissible mollifiers, then
$$
\lim_{\epsilon\to0}\sup_{z\in K}\bigl|\phi*\rho_\epsilon(z)-\phi*\sigma_\epsilon(z)\bigr|=0.
$$
\end{lem}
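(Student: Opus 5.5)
Because $\phi$ is harmonic on $D$, it is smooth there, and hence uniformly continuous on compact subsets of $D$. The proof will work pathwise, for a fixed realization of $\phi$. We first fix a compact neighborhood of $K$ inside $D$. Let $K=\overline{B(a,R)}$. Since $K\subset D$ is compact and $D$ is open, there is $\eta>0$ with $K_\eta:=\overline{B(a,R+\eta)}\subset D$. For $z\in K$ and $\epsilon<\eta$, every measure $\nu_{z,\epsilon}$ with $\nu\in\{\rho,\sigma\}$ is supported in $\overline{B(z,\epsilon)}\subset K_\eta$, because $\operatorname{supp}\nu\subseteq\overline{B(0,1)}$.

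The first step checks that the abuse of notation is harmless. For a genuinely smooth function $\phi$ on a neighborhood of $K_\eta$, we need
\[
\phi*\nu_\epsilon(z)=(\phi,\nu_{z,\epsilon})=\int \phi(z+\epsilon y)\,\nu(dy),
\]
where the middle pairing is the one of Definition~\ref{def:measureExtension}. This follows from the smooth approximation $\nu_n=\nu*\psi_n$ of Lemma~\ref{lem:measureApproximation}. For $n$ large, $\nu_{n}$ rescaled by $\epsilon$ and translated by $z$ is still supported in a compact subset of $D$. The pairing with $\phi$ is then the ordinary integral, and $\int\phi\,d(\nu_n)_{z,\epsilon}\to\int\phi\,d\nu_{z,\epsilon}$ because $\phi$ is continuous and $\nu_n\to\nu$ weakly. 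The pathwise identity and the $L^2$ limit therefore agree almost surely, since an $L^2$ limit and an almost sure limit coincide a.s. The decomposition \eqref{XinftyMarkov} is linear in the field, so no other issue arises.

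The main estimate then uses that both mollifiers have total mass $1$:
\[
\phi*\rho_\epsilon(z)-\phi*\sigma_\epsilon(z)=\int\bigl(\phi(z+\epsilon y)-\phi(z)\bigr)(\rho-\sigma)(dy).
\]
Its absolute value is at most $2\,\omega(\epsilon)$, where $\omega$ is the modulus of continuity of $\phi$ on $K_\eta$. To obtain a quantitative rate instead, we can use the bound $2\epsilon\sup_{K_\eta}|\nabla\phi|$, which is finite since $\phi$ is $C^1$ up to the compact set $K_\eta\subset D$. This bound is uniform in $z\in K$ and tends to $0$ as $\epsilon\to0$, which proves the lemma almost surely.

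The only delicate point is the first step: identifying the distributional pairing of $\phi$ with the measure $\nu_{z,\epsilon}$, simultaneously for all $z\in K$ and $\epsilon$, with the pointwise integral. I expect this to be handled by taking the continuous-in-$(z,\epsilon)$ versions. For each fixed $(z,\epsilon)$ the two quantities agree a.s., and both sides are continuous in $(z,\epsilon)$: the integral side because $\phi$ is continuous, and the pairing side by the chosen jointly continuous modifications. Equality therefore extends from a countable dense set to all $(z,\epsilon)$ at once.
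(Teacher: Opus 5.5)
Your proof is correct and is essentially the paper's argument: uniform continuity of $\phi$ on a compact neighborhood of $K$ in $D$, combined with $\rho$ and $\sigma$ being probability measures supported in $\overline{B(0,1)}$, gives a bound that is uniform in $z\in K$ and tends to $0$, and your Lipschitz rate $2\epsilon\sup_{K_\eta}|\nabla\phi|$ is a harmless sharpening. Your extra step identifying the distributional pairing with the pathwise integral is care the paper omits, but it is cleaner to define $\phi*\nu_\epsilon(z):=\int\phi(z+\epsilon y)\,\nu(dy)$ directly, which is automatically jointly continuous, than to invoke continuous modifications of the pairing, since those come from Condition $(A_p)$ and this lemma does not assume it.
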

\begin{proof}
Since $\phi$ is harmonic on $D$, it is smooth, hence uniformly continuous on a compact neighborhood of $K$ contained in $D$. Therefore,
$$
\sup_{z\in K}\sup_{x\in B(0,1)}|\phi(z+\epsilon x)-\phi(z)|\to 0\qquad\text{as }\epsilon\to0.
$$
Because $\rho$ and $\sigma$ are probability measures supported in $\overline{B(0,1)}$, it follows that both
$\phi*\rho_\epsilon(z)$ and $\phi*\sigma_\epsilon(z)$
converge uniformly on $K$ to $\phi(z)$ as $\epsilon\to0$.
Hence their difference converges to $0$ uniformly on $K$.
\end{proof}

We are now in a position to prove the equivalence of thick point sets for the zero-boundary GFF.

\begin{proof}[Proof of Theorem~\ref{thm:main-zero}]
By the same reduction as in the proof of Theorem \ref{thm:main}, it suffices to show that for every closed ball $K\subset D$, 
$$
\frac{\sup_{z\in K}\bigl|X^{D}(z,\epsilon)\bigr|}{\log(1/\epsilon)}\longrightarrow 0
\qquad\text{almost surely as }\epsilon\to0.
$$
This is enough, since $D$ can be covered by countably many such closed balls.

By \eqref{XinftyMarkov} we get,
$$|\mathring h*\rho_\epsilon(z) - \mathring h *\sigma_\epsilon(z)| = |X(z,\epsilon)-(\phi*\rho_\epsilon(z)-\phi * \sigma_\epsilon(z))| \leq |X(z,\epsilon)| + |\phi*\rho_\epsilon(z)-\phi * \sigma_\epsilon(z)|$$
Taking the supremum over $K$ and dividing by $\log (1/\epsilon)$, we get
$$\sup_{z \in K}\frac{|\mathring h*\rho_\epsilon(z) - \mathring h *\sigma_\epsilon(z)|}{\log (1/\epsilon)} \leq \sup_{z \in K}\frac{|X(z,\epsilon)|}{\log (1/\epsilon)} + \sup_{z \in K}\frac{|\phi*\rho_\epsilon(z)-\phi * \sigma_\epsilon(z)|}{\log (1/\epsilon)}$$
By assumption, both $\rho$ and $\sigma$ satisfy Condition \eqref{eq:secondMoment} with respect to the log-correlated Gaussian field $h$. The same argument as in Proposition \ref{prop:uniformDiffGoal}, applied on a bounded open neighborhood $U\subset D$ of $K$ to the LGF $h$ on $\mathbb{R}^d$, gives
$$
\frac{\sup_{z\in K}\bigl|X(z,\epsilon)\bigr|}{\log(1/\epsilon)}\longrightarrow 0
\qquad\text{almost surely as } \epsilon \to 0.
$$
By Lemma \ref{lem:harmonicSmall}, the term $\frac{\sup_{z\in K}\bigl|\phi*\rho_\epsilon(z)-\phi*\sigma_\epsilon(z)\bigr|}{\log(1/\epsilon)}$ tends to zero as $\epsilon \to 0$. We can conclude 
$$
\sup_{z \in K}\frac{|\mathring h*\rho_\epsilon(z) - \mathring h *\sigma_\epsilon(z)|}{\log (1/\epsilon)}\longrightarrow 0
\qquad\text{almost surely as }\epsilon\to0.
$$
Finally, note that
$$
\mathring h*\rho_\epsilon(z)-\mathring h*\sigma_\epsilon(z)\overset{d}{=}X^{D}(z,\epsilon)
$$
by \eqref{hcircequivalence}. Choose countably many closed balls $K_n\subset D$ with $D=\bigcup_{n\ge1}K_n$. The convergence proved above holds almost surely for every $K_n$ simultaneously. Hence, from the same reduction used at the beginning of the proof of Theorem \ref{thm:main}, almost surely
$$
\mathcal T_{\alpha,\star}^{D,\rho}
=
\mathcal T_{\alpha,\star}^{D,\sigma}
\qquad
\text{for every }
\alpha\in\mathbb R
\text{ and every }
\star\in\{\liminf,\limsup,\lim\}.
$$
\end{proof}

\section*{Acknowledgments} 
This paper was completed during the University of Chicago Math REU program. I would like to thank Professor Ewain Gwynne for his support and guidance throughout this program. He is a kind and inspiring mathematician, and it was a pleasure and privilege to work with him. This paper would not have been completed without his many helpful suggestions. I would like to thank Peter May for organizing the program, and Boston College for financially supporting my participation.

\end{document}